\documentclass[a4paper]{elsarticle}
\usepackage{amssymb,amscd,amsthm,latexsym,qsymbols, mathrsfs }
\usepackage[all]{xy}

\usepackage[british]{babel}
\usepackage{amsmath}
\usepackage{amsfonts}
\usepackage{amssymb}
\usepackage{amsthm}
\usepackage[all]{xy}

\usepackage{textcomp}

\theoremstyle{plain}
\newtheorem{theorem}{Theorem}[section]
\newtheorem{lemma}[theorem]{Lemma}
\newtheorem{obs}[theorem]{Observation}
\newtheorem{proposition}[theorem]{Proposition}
\newtheorem{corollary}[theorem]{Corollary}

\theoremstyle{definition}

\newtheorem{definition}[theorem]{Definition}

\theoremstyle{remark}

\newtheorem{example}[theorem]{Example}

\newtheorem{remark}[theorem]{Remark}

\hyphenation{mono-mor-phisms ideal-deter-mined}

\begin{document}

\title{Weighted commutators in semi-abelian categories}

\author[MG]{Marino Gran\fnref{fn1,fn3}}
\ead{marino.gran@uclouvain.be}

\author[GJ]{George Janelidze\fnref{fn2,fn3}}
\ead{george.janelidze@uct.ac.za}

\author[AU]{Aldo Ursini\fnref{fn3}}
\ead{ursini@unisi.it}

\fntext[fn1]{Supported by  F.N.R.S. grant \emph{Cr\'edit aux chercheurs} 1.5.016.10F}
\fntext[fn2]{Partially supported by South African National Research Foundation}
\fntext[fn3]{The authors gratefully acknowledge kind hospitality of Universit\'e catholique de Louvain and of University of Cape Town}

\address[MG]{Institut de Recherche en Math\'ematique et Physique, Universit\'e catholique de Louvain, Chemin du Cyclotron 2, 1348 Louvain-la-Neuve, Belgium}
\address[GJ]{Department of Mathematics and Applied Mathematics, University of Cape Town, Rondebosch 7701, Cape Town, South Africa}
\address[AU]{Dipartimento di Ingegneria dell'Informazione e Scienze Matematiche, Universit\'a di Siena, via Roma 56, 53100 Siena, Italy, and \\ Mathematics Division, Department of Mathematical Sciences, Stellenbosch University, Private Bag X1 Matieland 7602, South Africa}

\begin{abstract}
We introduce new notions of Òweighted centralityÓ and Òweighted commutatorsÓ
corresponding to each other in the same way as centrality of congruences and commutators
do in the Smith commutator theory. Both the Huq commutator of subobjects and
Pedicchio's categorical generalization of Smith commutator are special cases of our
weighted commutators; in fact we obtain them by taking the smallest and the largest
weight respectively. At the end of the paper we briefly consider the universal-algebraic
context in connection with an older work of the third author on the ideal theory version of
the commutator theory.
\\

\noindent Keywords: semi-abelian categories, centrality, commutator, commutator terms. \\

\noindent AMS Math. Subj. Class.: 18C05,  18E10, 08A30.
\end{abstract}

\maketitle

\section*{Introduction}
Classically, subgroups $X$ and $Y$ of a group A are said to centralize each other if $xy = yx$ for
every $x \in X$ and $y \in Y$. When $X$ and $Y$ are normal subgroups in $A$, so is their commutator
$[X,Y]$, and, moreover, it is the smallest normal subgroup $C$ in $A$, for which
$$(xC)(yC) = (yC)(xC)$$
in $A/C$ for every $x \in X$ and $y \in Y$. These concepts of centrality (or centralization) and
commutator have been generalized many times and in various ways from groups to
various types of universal algebras and even further to objects of abstract categories
satisfying suitable exactness conditions. Let us mention three of these generalizations
relevant for our purposes:
\subsection*{Commuting arrows and Huq commutator} Let $\mathbb C$ be a pointed category with finite
products. Two morphisms $x \colon X \rightarrow A$ and $y \colon Y \rightarrow A$ in $\mathbb C$ with the same codomain are said
to commute, if there exists a morphism $m : X\times Y \rightarrow A$ making the diagram
\begin{equation}\label{Huq}
          \xymatrix@=30pt{X  \ar[r]^-{\langle 1,0\rangle } \ar[dr]_{x} & X \times Y \ar@{.>}[d]^{m} & Y  \ar[l]_-{\langle 0, 1 \rangle } \ar[dl]^{y} \\ & A, &
         }
\end{equation}
commute; here $1$ and $0$ denote suitable identity and zero morphisms respectively. It is
easy to see that in the case of groups and $x \colon X \rightarrow A$ and $y \colon Y \rightarrow A$ being the inclusion
maps, $x$ and $y$ commute in this sense if and only if every element of $X$ commutes with
every element of $Y$ in the usual sense. The notion of commuting morphisms was first
introduced and studied by S. A. Huq \cite{H}  in a context closely related to semi-abelian;
moreover, the so-called old style axioms for semi-abelian categories \cite{JMT} are essentially
Huq's axioms. More recently, the notion of commuting morphisms has been examined in
different categorical contexts by D. Bourn and his collaborators (see \cite{Intrinsic, BG, BB} and
references therein). As suggested by the group case, what is meant by the Huq
commutator of $X \rightarrow A$ and $Y \rightarrow A$ is the smallest normal subobject $C \rightarrow A$ of $A$, for which
the composites
$$X \rightarrow A \rightarrow Coker(C \rightarrow A) \qquad  {\rm and} \qquad Y \rightarrow A \rightarrow Coker(C \rightarrow A)$$
commute. In particular, the Huq commutator always exists in any semi-abelian category,
since it is the kernel of its cokernel that can be presented as the colimit of the diagram of
solid arrows in (1), as observed by D. Bourn \cite{Strongly}.
\subsection*{Congruences centralizing each other, and Smith commutator} Let $\mathbb C$ be a
Mal'tsev (=congruence permutable) variety of universal algebras, and $R$ and $S$ be
congruences on the same algebra $A$ in $\mathbb C$. Reformulating the original definition of J. D. H.
Smith \cite{S} in a slightly different language, $R$ and $S$ are said to centralize each other by
means of the centralizing congruence $E$, if there is an internal double equivalence
relation (=Òdouble congruenceÓ) in $\mathbb C$ of the form

$$\xymatrix@=30pt{ E \ar@<1ex>[d]^{}
\ar@<-1ex>[d]_{}
\ar@<1ex>[r]^{}
\ar@<-1ex>[r]_{}
 & S \ar@<1ex>[d]^{} \ar@<-1ex>[d]_{}
 \\
R \ar@<1ex>[r]^{} \ar@<-1ex>[r]_{}
 & A,
 }
$$
where the bottom row and the right-hand column represent $R$ and $S$ respectively, and the
whole diagram forms two parallel discrete fibrations of equivalence relations. The Smith
commutator $[R,S]_{\mathsf{Smith}}$ can then be defined as the smallest congruence $C$ on $A$, for which the
congruences on $A/C$ induced by $R$ and $S$ centralize each other. This commutator was
introduced and studied in \cite{S}, and then generalized in various ways by other authors,
who also proposed several equivalent definitions. Out of those we will only consider:
\subsection*{Categorical version of Smith commutator due to Pedicchio} M. C. Pedicchio \cite{P,P1},
not only extended Smith's definitions to the context of abstract Mal'tsev categories,
but almost at the same time discovered that the existence of centralizing congruence for $R$
and $S$ in the Smith definition above is equivalent to the existence of the Kock
pregroupoid structure on the span $\xymatrix{A/X & A \ar[l] \ar[r] & A/Y}$. Still equivalently, $R$ and $S$ centralize
each other if and only if there exists a morphism $p : R \times_A S \rightarrow A$ making the diagram
\begin{equation}\label{partial}
  \xymatrix@=40pt{R  \ar[r]^-{\langle r_1, r_2, r_2  \rangle } \ar[dr]_{r_1} & R \times _A S  \ar@{.>}[d]^{p} & S  \ar[l]_-{\langle s_1, s_1, s_2  \rangle } \ar[dl]^{s_2} \\ & A&
 }
\end{equation}
in which $R = (R,r_1,r_2)$, $S = (S,s_1,s_2)$, and $R\times_A S = R \times_{(r_2,s_1)} S$, commute. That is, such a
morphism $p$ exists if and only if $[R,S]_{\mathsf{Smith}}$ is the equality relation $\Delta_A$ on $A$.

 In a finitely complete pointed Mal'tsev category both the notions of centra-lity of morphisms and of equivalence relations can be defined. It was then a natural question to determine the precise relationship between them: this question was first considered in \cite{BG}, where it was proved that the existence of a partial Mal'tsev operation $p \colon R \times_A S \rightarrow A$ always makes the corresponding normal monomorphisms $n_R \colon N_R \rightarrow A$ and $n_S \colon N_S \rightarrow A$ commute.
The converse implication, saying that the equivalence relations $R$ and $S$ centralise each other whenever the corresponding normal monomorphisms commute, turned out to be true both in strongly protomodular categories \cite{BG} and in action accessible categories \cite{BJ}. However, this latter implication fails for a general semi-abelian category \cite{JMT}, as a counter-example found by the second author in the semi-abelian variety of digroups shows (see \cite{Strongly}). Furthermore, Exercise $10$ of Chapter $5$ in \cite{FM},
published much earlier, in fact also gives such a counter-example, but in the
variety of loops; surely it was not noticed before only because the Huq
commutator is not mentioned in \cite{FM}.



In the present article both the Huq centrality of morphisms and the Smith centrality of equivalence relations are shown to be special cases of a new notion, which we call \emph{weighted centrality}. The internal multiplication in the definition of weighted centrality of two morphisms $x \colon X \rightarrow A$ and $y\colon Y \rightarrow A$ will be a morphism $m \colon  (W+X)\times_W (W+Y) \rightarrow  A$ which depends on a ``weight'', which is in fact another morphism $w \colon W \rightarrow A$, as explained in Section \ref{wcentrality}. The Huq centrality is recovered in the case of the zero morphism $0 \colon 0 \rightarrow A$, since $(W+X)\times_W (W+Y)$ then reduces to $X \times Y$ and $m$ to the dotted arrow in diagram $(\ref{Huq})$; the Smith centrality of equivalence relations is recovered in the case where the ``weight'' is the identity morphism $1_A \colon A \rightarrow A$. In the semi-abelian context, this notion of ``weighted centrality'' determines a corresponding notion of \emph{weighted subobject commutator} (Section \ref{subobject}) and of \emph{weighted normal commutator} (Section \ref{weightednormal}), whose study we begin in the present article. We compare these notions with the Huq commutator of morphisms in Section \ref{SectionHuq}, and with the categorical notion of commutator of equivalence relations \cite{P} in Sections \ref{Epimorphic} and \ref{Pedicchio}. In the last section we relate our constructions to certain
universal-algebraic ones involving the commutator of subalgebras.

A  notion of commutator of subalgebras, due to the third author, appears in \cite{UPreprint, GU, Urs94}, and it
corresponds to
the Smith commutator of congruences in any semi-abelian variety $\mathbb C$
(and, more generally, to the so-called modular commutator in ideal determined varieties).
In the case of two normal subalgebras $N_R$ and $N_S$ of an algebra $A$, which are the $0$-classes of two congruences $R$ and $S$, respectively, on $A$, this commutator $[N_R, N_S]_{{\mathbb C}, A}$ has the following remarkable property:
$$[N_R, N_S]_{\mathbb C, A} = 0 \quad \Leftrightarrow \quad [R,S]_{\mathsf{Smith}}= \Delta_A.$$
In the last section we recall this definition of the commutator ${[ \cdot \,, \cdot ]}_{{\mathbb C}, A}$, that is based on a suitable notion of commutator term, and we prove that this is another instance of the normal weighted commutator introduced in Section \ref{weightednormal}.
Accordingly, the present approach provides, in particular, a categorical formulation of this universal-algebraic commutator. We conclude the article  with simple descriptions of the (normal) weighted commutators in the variety of commutative (associative, non-unital) rings.
 \vspace{5mm}

\section{Commuting morphisms and pregroupoids}\label{wcentrality}
Throughout this and the next section we will use the following data: a pointed category $\mathbb C$ with finite limits and colimits, objects $A$, $W$, $X$, $Y$ in $\mathbb C$, and morphisms $w \colon W \rightarrow  A$, $x \colon  X \rightarrow A$, and $
y \colon Y \rightarrow A$. We will eventually assume that $w$ is a monomorphism, and then think of $W = (W,w)$ as a subobject of $A$, and the same applies to $x$ and $y$.

The  \emph{weighted centrality} of two morphisms $x \colon X \rightarrow A$ and $y \colon Y \rightarrow A$ over $w \colon W \rightarrow A$ is defined by the existence of an internal multiplication $X\times Y \rightarrow A$ over $W = (W,w)$ in the following sense:
\begin{definition}\label{defmultiplication}
Given morphisms $w \colon W \rightarrow A$, $x \colon X \rightarrow A$, and $y \colon Y \rightarrow A$, an \emph{internal multiplication} $X\times Y \rightarrow A$ \emph{over} $W = (W,w)$ is a morphism
         $$ m \colon (W + X)\times_W (W+Y) = (W+X)\times_{([1,0],[1,0])}(W+Y) \rightarrow  A,$$
         making the diagram
         \begin{equation}\label{diagram1}
          \xymatrix@=50pt{W+X \ar[r]^-{\langle 1,\iota_1[1,0] \rangle } \ar[dr]_{[w,x]} & (W+X)\times_W (W+Y) \ar[d]^m & W+Y \ar[l]_-{\langle \iota_1 [1,0], 1\rangle } \ar[dl]^{[w,y]} \\ & A &
         }
         \end{equation}
         commute.
\end{definition}
By analogy with the notion of centrality of morphisms \cite{H} recalled in the Introduction, we will also say that $(X,x)$ \emph{and} $(Y,y)$ \emph{commute} \emph{over} $(W,w)$ when there exists an internal multiplication  $X\times Y \rightarrow A$ over $W = (W,w)$.
In this case we then write
$$[(X,x), (Y,y)]_{(W,w)} = 0.$$

 \begin{proposition}\label{existencegamma}
         The composites
$$\xymatrix{(W+X)\times_W (W+Y)  \ar[r]^-{\pi_1} & W+X \ar[r]^-{[w,x]} & A},$$
\begin{equation}\label{defipullback}
\xymatrix{ & W+X \ar[dr]_{[1,0]} \ar[drr]^{[w,0]} & & \\
         (W+X)\times_W (W+Y)  \ar[dr]_-{\pi_2} \ar[ur]^-{\pi_1} & &W  \ar[r]^(.4)w & A,\\
       & W+ Y \ar[ur]^{[1,0]} \ar[urr]_{[w,0]}& &   }
 \end{equation}
and $$\xymatrix{(W+X)\times_W (W+Y)  \ar[r]^-{\pi_2} & W+Y \ar[r]^-{[w,y]} & A},$$
determine a morphism
$$\gamma = \gamma_w = \langle [w,x] \pi_1,[w,0]\pi_1,[w,y]\pi_2\rangle = \langle [w,x]\pi_1,[w,0]\pi_2,[w,y]\pi_2\rangle  $$
from $(W+X)\times_W (W+Y)$ to $A_3$, where $A_3$ is the pullback $A\times_{A/X} A\times_{A/Y} A = A\times_{\mathsf{coker}(x)} A\times_{\mathsf{coker}(y)} A$.
\end{proposition}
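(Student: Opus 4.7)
The plan is to verify two separate claims implicit in the statement. First, that the two formulas given for $\gamma$ agree, i.e.\ that $[w,0]\pi_1 = [w,0]\pi_2$ so the middle component is well-defined. Second, that the resulting triple satisfies the two equations cutting out $A_3$ inside $A\times A\times A$, and hence factors through the double pullback.

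For the first point, I would observe that $(W+X)\times_W(W+Y)$ is, by construction, the pullback along the pair $([1,0],[1,0])$, so $[1,0]\pi_1=[1,0]\pi_2$ holds tautologically. A general remark on cotuples is that for any morphism $f\colon W\to Z$, the cotuple $[f,0]\colon W+V\to Z$ (with $V=X$ or $V=Y$) agrees with $f\circ[1,0]$, since both sides coincide on the two coproduct injections. Taking $f=w$ and postcomposing the pullback identity with $w$ therefore gives $[w,0]\pi_1 = w[1,0]\pi_1 = w[1,0]\pi_2 = [w,0]\pi_2$, as required.

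For the second point, I would check the two equations
$$\mathsf{coker}(x)\circ[w,x]\pi_1 = \mathsf{coker}(x)\circ[w,0]\pi_1 \quad\text{and}\quad \mathsf{coker}(y)\circ[w,0]\pi_2 = \mathsf{coker}(y)\circ[w,y]\pi_2.$$
Both reduce to the defining identity $\mathsf{coker}(f)\circ f=0$ combined with the cotuple manipulation above: for the first,
$$\mathsf{coker}(x)\circ[w,x] = [\mathsf{coker}(x)\circ w,\,0] = \mathsf{coker}(x)\circ w\circ[1,0] = \mathsf{coker}(x)\circ[w,0],$$
and precomposing with $\pi_1$ yields the stated equality; the argument for $y$ is strictly symmetric. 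The universal property of $A_3=A\times_{A/X}A\times_{A/Y}A$ then produces the factorisation $\gamma$.

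The argument is essentially a bookkeeping exercise with cotuples and pullbacks, so there is no serious obstacle. The only subtlety worth flagging is the convenient fact that the middle component of $\gamma$ admits two interchangeable descriptions, one naturally paired via $\pi_1$ with the $\mathsf{coker}(x)$-equation and the other via $\pi_2$ with the $\mathsf{coker}(y)$-equation; this is precisely why both expressions are recorded in the statement.
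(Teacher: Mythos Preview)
Your proof is correct and follows essentially the same approach as the paper: both reduce the factorisation through $A_3$ to the identities $\mathsf{coker}(x)[w,x]=\mathsf{coker}(x)[w,0]$ and $\mathsf{coker}(y)[w,y]=\mathsf{coker}(y)[w,0]$ via the same cotuple manipulation. You are slightly more explicit than the paper in spelling out why $[w,0]\pi_1=[w,0]\pi_2$ (the paper leaves this to the displayed diagram), but the argument is the same.
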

\begin{proof}
  This follows from a straightforward calculation using the equalities \\ $\mathsf{coker}(x)x = 0$ and
$\mathsf{coker}(y)y = 0$:
$$\mathsf{coker}(x)[w,x]\pi_1 = [\mathsf{coker}(x)w, \mathsf{coker}(x)x] \pi_1 = [\mathsf{coker}(x)w, 0] \pi_1 = \mathsf{coker}(x)[w,0] \pi_1,
  $$
and similarly $\mathsf{coker}(y)[w,y] \pi_2 = \mathsf{coker}(y)[w,0]\pi_2$.
\end{proof}
Recall that an \emph{internal pregroupoid} structure \cite{Ko} on the span  $$\xymatrix{A/X & A \ar[l] \ar[r] & A/Y}$$ in a Mal'tsev category is a partial Mal'tsev operation $$p \colon A_3 = A \times_{A/X} A \times_{A/Y}  A \rightarrow A$$ making the following diagram commute:
$$
\label{partial}
  \xymatrix@=40pt{A \times_{A/X} A  \ar[r]^-{\langle \pi_1, \pi_2, \pi_2  \rangle } \ar[dr]_{\pi_1} & A_3  \ar@{.>}[d]^{p} & A \times_{A/Y} A  \ar[l]_-{\langle \pi_1, \pi_1, \pi_2  \rangle } \ar[dl]^{\pi_2} \\ & A. &
 }
$$
This precisely means that \emph{the equivalence relations} $(A \times_{A/X} A, \pi_1, \pi_2)$ \emph{and} $(A \times_{A/Y} A , \pi_1, \pi_2)$ \emph{centralise each other}: we then write
$$[A\times_{A/X} A, A\times_{A/X} A]= \Delta_A. $$

  \begin{theorem}\label{pregroupoid}
  If $p \colon A_3 \rightarrow A$ is an internal pregroupoid structure on the span $\xymatrix{A/X & A \ar[l] \ar[r] & A/Y}$, then $p\gamma_w \colon(W+X)\times_W (W+Y) \rightarrow A$ is an internal multiplication $X\times Y \rightarrow A$ over $W = (W,w)$.
  \end{theorem}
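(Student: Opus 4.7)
The goal is to verify that $m = p\gamma_w$ makes diagram (\ref{diagram1}) commute, i.e.\ that
\[
p\gamma_w \circ \langle 1, \iota_1[1,0]\rangle = [w,x] \quad \text{and} \quad p\gamma_w \circ \langle \iota_1[1,0], 1\rangle = [w,y].
\]
My plan is to treat the two triangles symmetrically, each one reducing to one half of the pregroupoid axiom after a direct computation with the two coprojections $\iota_1, \iota_2$ and the codiagonals $[w,x], [w,y], [w,0]$.

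For the left-hand triangle, I would first compose each of the three components of $\gamma_w$ with $\langle 1, \iota_1[1,0]\rangle \colon W+X \to (W+X)\times_W(W+Y)$. Since $\pi_1 \langle 1, \iota_1[1,0]\rangle = 1_{W+X}$ and $\pi_2 \langle 1, \iota_1[1,0]\rangle = \iota_1[1,0]$, and since $[w,y]\iota_1 = w$, these composites evaluate to $[w,x]$, $[w,0]$ and $w[1,0] = [w,0]$ respectively. Hence
\[
\gamma_w \circ \langle 1, \iota_1[1,0]\rangle = \langle [w,x],\,[w,0],\,[w,0]\rangle \colon W+X \longrightarrow A_3.
\]
The key observation is that this morphism factors as $\langle \pi_1,\pi_2,\pi_2\rangle \circ \langle [w,x],[w,0]\rangle$, where $\langle [w,x],[w,0]\rangle \colon W+X \to A\times_{A/X} A$ is a well-defined morphism because $\mathsf{coker}(x)[w,x] = \mathsf{coker}(x)[w,0]$ (this is exactly the identity proved in Proposition \ref{existencegamma}); the factorization through the second pullback $(\cdot)\times_{A/Y}(\cdot)$ is automatic because the second and third coordinates coincide. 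Applying $p$ and invoking the pregroupoid identity $p\langle \pi_1,\pi_2,\pi_2\rangle = \pi_1$ then yields
\[
p\gamma_w \circ \langle 1, \iota_1[1,0]\rangle = \pi_1 \circ \langle [w,x],[w,0]\rangle = [w,x],
\]
as required.

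For the right-hand triangle I would run the mirror argument: composing with $\langle \iota_1[1,0], 1\rangle$ and using $[w,x]\iota_1 = w$ gives $\gamma_w \circ \langle \iota_1[1,0], 1\rangle = \langle [w,0],[w,0],[w,y]\rangle$, which factors through $A \times_{A/Y} A$ via $\langle [w,0],[w,y]\rangle$ (here the compatibility $\mathsf{coker}(y)[w,0] = \mathsf{coker}(y)[w,y]$ is the second identity of Proposition \ref{existencegamma}, and the first pullback condition is trivial). The second half of the pregroupoid axiom, $p\langle \pi_1,\pi_1,\pi_2\rangle = \pi_2$, then gives $[w,y]$.

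I do not anticipate any real obstacle: the argument is a diagram chase whose only subtlety is confirming that the intermediate morphisms indeed factor through the correct fibre products, which is precisely what Proposition \ref{existencegamma} was set up to guarantee. The second expression for $\gamma_w$, namely $\langle [w,x]\pi_1,[w,0]\pi_2,[w,y]\pi_2\rangle$, is what makes the right-hand triangle land cleanly in $A\times_{A/Y}A$; switching between the two expressions of $\gamma_w$ according to which triangle is being verified is the only mild care the proof requires.
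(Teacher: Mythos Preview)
Your proof is correct and follows essentially the same route as the paper: compute $\gamma_w \circ \langle 1,\iota_1[1,0]\rangle = \langle [w,x],[w,0],[w,0]\rangle$ and then apply the pregroupoid identity to extract $[w,x]$, with the symmetric argument for the other triangle. The only difference is presentational---you spell out the factorisation through $A\times_{A/X}A$ and the use of $p\langle\pi_1,\pi_2,\pi_2\rangle=\pi_1$ explicitly, whereas the paper passes directly from $p\langle [w,x],[w,0],[w,0]\rangle$ to $[w,x]$ in one line.
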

 \begin{proof}
 This is again a straightforward calculation:
\begin{eqnarray}
          p \gamma_w\langle1,\iota_1[1,0]\rangle &=& p\langle [w,x]\pi_1,[w,0]\pi_1,[w,y]\pi_2 \rangle \langle 1,\iota_1[1,0] \rangle \nonumber
 \\
          &=& p\langle [w,x]\pi_1\langle 1,\iota_1[1,0]\rangle,[w,0]\pi_1\langle1,\iota_1[1,0]\rangle,[w,y]\pi_2\langle1,\iota_1[1,0]\rangle \rangle \nonumber  \\
          & = & p\langle [w,x],[w,0],[w,y]\iota_1[1,0]\rangle \nonumber  \\
  &=& p\langle [w,x],[w,0],w[1,0]\rangle  \nonumber  \\  &=& p\langle [w,x],[w,0],[w,0]\rangle \nonumber  \\ &=& [w,x] \nonumber
\end{eqnarray}
and similarly $p\gamma_w\langle \iota_1[1,0],1\rangle = [w,y]$ in the notation of Definition \ref{defmultiplication}.
\end{proof}
Recall that a regular category $\mathbb C$ is \emph{homological} \cite{BB} when it is pointed and the Split Short Five Lemma holds in $\mathbb C$ \cite{Bourn0}. In a homological category an internal multiplication is necessarily unique, when it exists, since the arrows $\langle 1,\iota_1[1,0] \rangle$ and $\langle \iota_1 [1,0], 1\rangle$ in diagram (\ref{diagram1}) above are jointly strongly epimorphic (see \cite{Bourn96}).
\begin{lemma}\label{mkeriszero}
Let $\mathbb C$ be a homological category. If $m \colon  (W+X)\times_W (W+Y) \rightarrow A$ is an internal multiplication $X\times Y \rightarrow A$ over
$W = (W,w)$ and $w$ is a monomorphism, then $m(\ker(\gamma_w)) = 0$.
\end{lemma}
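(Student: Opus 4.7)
The plan is to factor $k := \ker(\gamma_w)$ through successive subobjects of $(W+X)\times_W(W+Y)$ and then invoke the unital property of homological categories.

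From $\gamma_w k = 0$, the three projections out of $A_3$ being jointly monic, one reads off $[w,x]\pi_1 k = 0$, $[w,0]\pi_1 k = 0$ and $[w,y]\pi_2 k = 0$. The middle equation says $w[1,0]\pi_1 k = 0$, which since $w$ is monic forces $[1,0]\pi_1 k = 0$. Hence $\pi_1 k$ factors through the kernel $k_1 \colon K_1 \rightarrowtail W+X$ of $[1,0] \colon W+X \to W$ as $\pi_1 k = k_1 f_1$, and symmetrically $\pi_2 k = k_2 f_2$ for $k_2 \colon K_2 \rightarrowtail W+Y$ the kernel of $[1,0] \colon W+Y \to W$. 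These assemble into a factorization $k = \iota\langle f_1, f_2\rangle$, where $\iota := \langle k_1\pi_1', k_2\pi_2'\rangle \colon K_1 \times K_2 \rightarrowtail (W+X)\times_W(W+Y)$ (here $\pi_1', \pi_2'$ are the product projections) is the monomorphism identifying $K_1 \times K_2$ with $\ker([1,0]\pi_1) = \ker([1,0]\pi_2)$. The remaining two equations $[w,x]k_1 f_1 = 0$ and $[w,y]k_2 f_2 = 0$ then say that each $f_i$ further factors through $k_i' \colon K_i' \rightarrowtail K_i$, with $K_1' := \ker([w,x]k_1)$ and $K_2' := \ker([w,y]k_2)$. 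In total, $k$ factors as
\begin{equation*}
K \longrightarrow K_1' \times K_2' \xrightarrow{k_1' \times k_2'} K_1 \times K_2 \xrightarrow{\iota} (W+X)\times_W(W+Y),
\end{equation*}
and it suffices to prove $m \iota (k_1' \times k_2') = 0$.

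A short computation from Definition \ref{defmultiplication} gives $m\iota\langle 1, 0\rangle = [w,x] k_1$ and $m\iota\langle 0, 1\rangle = [w,y] k_2$; indeed, $\iota\langle 1, 0\rangle = \langle 1, \iota_1[1,0]\rangle \circ k_1$ because $\iota_1[1,0] k_1 = 0$, and then the first triangle in \eqref{diagram1} delivers the identity. Composing with $k_1'$ and $k_2'$ respectively annihilates both maps, so $m\iota(k_1' \times k_2') \colon K_1' \times K_2' \to A$ vanishes on both product injections $\langle 1, 0\rangle \colon K_1' \to K_1' \times K_2'$ and $\langle 0, 1\rangle \colon K_2' \to K_1' \times K_2'$.

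The decisive step is then to invoke the fact that every homological category is unital in the sense of Bourn: the two product injections above are jointly strongly epic in $K_1' \times K_2'$, which forces $m\iota (k_1' \times k_2') = 0$ and therefore $mk = 0$, as desired. The only non-routine ingredient is this unital property (ultimately a consequence of protomodularity); everything else is careful bookkeeping of kernel factorizations, the one subtle point being the correct identification of $\iota$ as a kernel inside the pullback.
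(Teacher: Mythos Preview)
Your proof is correct and follows essentially the same approach as the paper's: factor $\ker(\gamma_w)$ through a binary product, use the multiplication axiom to see that $m$ vanishes on both product injections, and conclude via the unital property of homological categories. The only difference is organizational: the paper works directly with $K\times K$ by writing $k=\langle u,v\rangle$ and observing that $u\times v$ lands in the pullback (since $[1,0]u=0=[1,0]v$), whereas you introduce the intermediate kernels $K_1,K_2,K_1',K_2'$; the paper's route is slightly more economical but the substance is identical.
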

\begin{proof}
Let us write $\ker(\gamma_w)$ as $\langle u,v\rangle \colon K \rightarrow (W+X)\times_W (W+Y)$. The equality $\gamma_w\langle u,v\rangle = 0$ means
$$[w,x]\pi_1\langle u,v\rangle = 0,  \, [w,0]\pi_1\langle u,v\rangle = 0 = [w,0]\pi_2 \langle u,v\rangle,  \,  {\rm and}  \, [w,y]\pi_2\langle u,v\rangle= 0$$
or, equivalently,
$$ [w,x]u = 0,   \, [w,0]u = 0 = [w,0]v,   \, {\rm and}   \, [w,y]v = 0.$$
Moreover, since $w$ is a monomorphism, $[w,0]u = w[1,0]u$ and $[w,0]v = w[1,0]v$, we also obtain
$ [1,0]u = 0 = [1,0]v \colon K \rightarrow W$.

This equality implies $[1,0]u\pi_1 = [1,0]v\pi_2 : K\times K \rightarrow W$, and therefore it allows one to present $\langle u,v\rangle$ as the composite $$\xymatrix{ K \ar[r]^-{\langle 1, 1 \rangle}& K \times K \ar[r]^-{u \times v}& (W+X)\times_W (W+Y).}$$

After that, since the morphisms $\langle 1,0\rangle : K \rightarrow K\times K$  and  $\langle 0,1\rangle : K \rightarrow K\times K$ are jointly epic (see \cite{BB}), in order to prove the desired equality $m\langle u,v\rangle = 0$ it suffices to prove that $m(u\times v)\langle 1,0\rangle = 0 = m(u\times v)\langle 0,1\rangle$, or, equivalently, that
$$ m\langle u,0\rangle = 0 = m \langle 0,v \rangle.$$
We have
$$ m\langle u,0\rangle = m\langle u,\iota_1[1,0]u\rangle = m\langle 1,\iota_1[1,0]\rangle u = [w,x]u = 0$$
and, similarly, $$ m\langle 0,v\rangle = m\langle \iota_1[1,0]v,v\rangle = m\langle \iota_1[1,0],1\rangle v= [w,y]v = 0.$$
\end{proof}
The following well-known property will be useful:
\begin{lemma}\label{Pullbackepi}
Consider a commutative diagram in a regular category $\mathbb C$
$$\xymatrix{C_1 \ar[r]^{c_1} \ar[d]_{f_1} & C \ar[d]_f & C_2 \ar[l]_{c_2} \ar[d]^{f_2}\\
D_1 \ar[r]_{d_1}  & D & D_2 \ar[l]^{d_2}
}
$$
where $f_1$, $f_2$ are regular epimorphisms, and $f$ is a monomorphism. Then the induced morphism $ f_1 \times f_2 \colon C_1 \times_C C_2 \rightarrow  D_1 \times_D D_2$ is a regular epimorphism.
\end{lemma}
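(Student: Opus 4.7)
The plan is to reduce the statement to two well-known stability properties of regular epimorphisms in a regular category: stability under pullback, and closure under composition. The bridge between the hypothesis that $f$ is a monomorphism and these two facts is the observation that, when $f$ is mono, the pullback of $c_1\colon C_1\to C$ and $c_2\colon C_2\to C$ coincides with the pullback of $d_1f_1=fc_1\colon C_1\to D$ and $d_2f_2=fc_2\colon C_2\to D$. Indeed, the universal property of $C_1\times_DC_2$ tests the equation $fc_1\pi_1=fc_2\pi_2$, which, because $f$ is a monomorphism, is equivalent to $c_1\pi_1=c_2\pi_2$, so the canonical comparison $C_1\times_CC_2\to C_1\times_DC_2$ is an isomorphism.

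Having made this identification, I factor the induced morphism $f_1\times f_2$ as the composite
\[
C_1\times_DC_2\;\xrightarrow{\;f_1\times 1\;}\;D_1\times_DC_2\;\xrightarrow{\;1\times f_2\;}\;D_1\times_DD_2,
\]
where both middle vertices are formed over $D$ using the evident morphisms to $D$. The first arrow is, by construction, the pullback of the regular epimorphism $f_1\colon C_1\to D_1$ along $d_2f_2\colon C_2\to D$, and the second arrow is the pullback of the regular epimorphism $f_2\colon C_2\to D_2$ along $d_1\colon D_1\to D$. In a regular category regular epimorphisms are pullback-stable, so both arrows are regular epimorphisms; since the composite of two regular epimorphisms in a regular category is again a regular epimorphism, $f_1\times f_2$ is a regular epimorphism, as required.

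The only slightly subtle step is the first one, the replacement of the pullback over $C$ by a pullback over $D$, since without $f$ being a monomorphism the two pullbacks would differ. Everything else is a routine application of standard regularity tools, so I would expect the proof to fit into a short paragraph once the identification $C_1\times_CC_2\cong C_1\times_DC_2$ has been recorded.
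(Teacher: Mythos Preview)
Your argument is correct. The paper does not actually prove this lemma: it introduces it with the phrase ``The following well-known property will be useful'' and then states it without proof, so there is nothing to compare your approach against.

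One small matter of phrasing: when you say that $f_1\times 1$ is ``the pullback of $f_1\colon C_1\to D_1$ along $d_2f_2\colon C_2\to D$'', the two maps do not share a codomain, so this cannot be read literally. What you mean (and what the pullback pasting lemma gives) is that $f_1\times 1$ is the pullback of $f_1$ along the projection $D_1\times_D C_2\to D_1$; equivalently, the square
\[
\xymatrix{C_1\times_D C_2 \ar[r] \ar[d]_{f_1\times 1} & C_1 \ar[d]^{f_1} \\ D_1\times_D C_2 \ar[r] & D_1}
\]
is a pullback because the outer rectangle and the lower square in the pasted diagram over $D$ are both pullbacks. The same remark applies to $1\times f_2$. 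With that adjustment the proof is clean and is exactly the standard one.
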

\begin{theorem}\label{normalepi} If $\mathbb C$ is homological and the morphisms $x \colon X \rightarrow A$ and $y \colon Y \rightarrow A$ have normal images, then the morphism
$$\gamma_1 = \gamma_{1_A} \colon (A+X)\times_A (A+Y) \rightarrow A_3$$ is a normal epimorphism.
\end{theorem}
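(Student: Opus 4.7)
My plan is to realize $\gamma_1$ as the canonical morphism between two pullbacks and then apply Lemma \ref{Pullbackepi}. Define
$$\phi_X = \langle [1_A, x],[1_A,0]\rangle \colon A+X \to A \times_{A/X} A, \quad \phi_Y = \langle [1_A,0],[1_A,y]\rangle \colon A+Y \to A \times_{A/Y} A.$$
These are well-defined because $\mathsf{coker}(x)[1_A,x] = \mathsf{coker}(x)[1_A,0]$ and analogously for $y$, exactly as in the proof of Proposition \ref{existencegamma}. A direct check gives $\pi_2 \phi_X = [1_A,0]$ and $\pi_1 \phi_Y = [1_A,0]$, so that $\gamma_1$ coincides with the induced morphism $(A+X) \times_A (A+Y) \to (A\times_{A/X} A) \times_A (A\times_{A/Y} A) = A_3$ determined by $\phi_X$ and $\phi_Y$ over the identity of $A$. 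Since $1_A$ is, in particular, a monomorphism, Lemma \ref{Pullbackepi} reduces the statement to showing that $\phi_X$ and $\phi_Y$ are regular epimorphisms.

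By symmetry it suffices to treat $\phi_X$. Factor $x = \bar{x}\cdot e$ through its image $\bar{x}\colon \bar{X} \to A$; by hypothesis $\bar{x}$ is a normal monomorphism, and $\mathsf{coker}(x) = \mathsf{coker}(\bar{x})$. Since $\phi_X = \phi_{\bar{X}} \cdot (1_A + e)$ and $1_A + e$ is a regular epimorphism (coproducts of regular epimorphisms are regular), one may assume from the start that $x$ itself is a normal monomorphism. Then $\langle x,0\rangle \colon X \to A\times_{A/X} A$ identifies $X$ with $\ker(\pi_2)$, and $\phi_X$ fits into the morphism of split short exact sequences
$$
\xymatrix@R=17pt{
0 \ar[r] & \ker([1_A,0]) \ar[r] \ar[d]_{k} & A+X \ar[r]^-{[1_A,0]} \ar[d]_{\phi_X} & A \ar[r] \ar@{=}[d] & 0 \\
0 \ar[r] & X \ar[r]_-{\langle x,0\rangle} & A\times_{A/X} A \ar[r]_-{\pi_2} & A \ar[r] & 0
}
$$
with splittings given by $\iota_1$ and the diagonal $\Delta$, respectively.

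The heart of the argument is that the induced morphism $k$ on kernels is a split epimorphism. Indeed, $[1_A,0]\iota_2 = 0$, so $\iota_2\colon X \to A+X$ lifts uniquely to $\tilde\iota_2 \colon X \to \ker([1_A,0])$; then
$$\langle x, 0\rangle \cdot k \cdot \tilde\iota_2 = \phi_X \cdot \iota_2 = \langle x, 0\rangle,$$
and since $\langle x,0\rangle$ is a monomorphism we conclude $k\cdot\tilde\iota_2 = 1_X$. The short five lemma for regular epimorphisms, valid in any homological category, then implies that $\phi_X$ is a regular (and hence, in the homological setting, a normal) epimorphism. The same argument works for $\phi_Y$, and Lemma \ref{Pullbackepi} gives that $\gamma_1$ is a regular, and therefore normal, epimorphism.

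The main difficulty lies in the verification of the split-epi property of $k$: this is the only step where the normality of the image of $x$ is decisive, since it is used both to identify $\ker(\pi_2)$ with $X$ via $\langle x,0\rangle$ and to guarantee that $\langle x,0\rangle$ is a monomorphism, which is precisely what allows the final cancellation $k\tilde\iota_2 = 1_X$.
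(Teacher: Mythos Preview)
Your proof is correct and follows essentially the same route as the paper: both factor $\gamma_1$ as the pullback morphism induced by $\phi_X=[\langle 1,1\rangle,\langle x,0\rangle]$ and $\phi_Y=[\langle 1,1\rangle,\langle 0,y\rangle]$, invoke Lemma~\ref{Pullbackepi}, and then argue that each $\phi$ is a regular epimorphism via the split-extension structure of $A\times_{A/X}A$ and $A\times_{A/Y}A$. The only difference is cosmetic: where the paper simply records that the target diagrams are split extensions (so that, by protomodularity, the kernel and the section are jointly strongly epic and hence $\phi_X$ is a regular epimorphism), you unfold this into a morphism of split short exact sequences and appeal to the regular-epi short five lemma, which amounts to the same thing.
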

\begin{proof}
The morphism $\gamma_1$ can be presented as the composite of the morphism
$$\langle [1,x],[1,0] \rangle \times \langle [1,0],[1,y]\rangle = [\langle1,1\rangle,\langle x,0\rangle ] \times [\langle 1,1\rangle,\langle 0,y\rangle ]$$
from $(A+X)\times_A(A+Y)$ to $(A\times_{A/X}A)\times_A (A\times_{A/Y}A)$ with the canonical isomorphism $
(A\times_{A/X}A)\times_A(A\times_{A/Y} A) \cong A_3$. Therefore, by Lemma \ref{Pullbackepi} and the fact that any regular epimorphism in $\mathbb C$ is a normal epimorphism, it suffices to prove that the morphisms $[\langle 1,1\rangle,\langle x,0\rangle]$ and $[\langle 1,1\rangle,\langle 0,y\rangle]$ are normal epimorphisms. This follows, however, from the fact that the diagrams
$$\xymatrix@=35pt{X' \ar[r]_-{\langle x' ,0 \rangle}& A \times_{A/X} A \ar@<-1 ex>[r]_-{\pi_2} & A \ar@<-1 ex>[l]_-{\langle 1, 1 \rangle }} $$
and
$$\xymatrix@=35pt{Y' \ar[r]_-{\langle 0,y' \rangle}& A \times_{A/Y} A \ar@<-1 ex>[r]_-{\pi_1} & A \ar@<-1 ex>[l]_-{\langle 1, 1 \rangle },} $$
where $x' \colon X' \rightarrow A$ and $y' \colon Y' \rightarrow A$ are the images of $x \colon X \rightarrow A$ and $y \colon Y \rightarrow A$ respectively, are split extensions
in $\mathbb C$.
\end{proof}
\begin{lemma}\label{mimpliespregroupoid}
Under the assumptions of Theorem \ref{normalepi}, if $m$ is an internal multiplication $X\times Y \rightarrow A$ over
$A = (A,1_A)$, and $p : A_3 \rightarrow A$ with $p\gamma_1 = m$, then $p$ is an internal pregroupoid structure on the span $\xymatrix{A/X & A \ar[l] \ar[r] & A/Y}.$
\end{lemma}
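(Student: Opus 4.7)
The goal is to verify the two defining identities of an internal pregroupoid structure on the span $\xymatrix{A/X & A \ar[l] \ar[r] & A/Y}$, namely
\begin{equation*}
p\langle \pi_1,\pi_2,\pi_2\rangle = \pi_1 \colon A\times_{A/X} A \to A
\end{equation*}
and
\begin{equation*}
p\langle \pi_1,\pi_1,\pi_2\rangle = \pi_2 \colon A\times_{A/Y} A \to A.
\end{equation*}
By the evident symmetry it suffices to establish the first identity; the second is handled by the mirror argument. In spirit the proof should just reverse the computation of Theorem \ref{pregroupoid}, recovering the pregroupoid identities for $p$ from the multiplication identities for $m = p\gamma_1$.

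The central idea is to exploit the normal epimorphism
\begin{equation*}
q = [\langle 1,1\rangle,\langle x,0\rangle]\colon A+X \longrightarrow A\times_{A/X} A
\end{equation*}
that already appears in the proof of Theorem \ref{normalepi}. Since $q$ is in particular an epimorphism, it is enough to prove that $p\langle\pi_1,\pi_2,\pi_2\rangle\,q = \pi_1\,q$ and then cancel $q$.

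To compare both sides I would compute $\gamma_1\langle 1,\iota_1[1,0]\rangle$ by means of the explicit formula for $\gamma_1$ from Proposition \ref{existencegamma} at $w = 1_A$. A short calculation using $\pi_1\langle 1,\iota_1[1,0]\rangle = 1$, $\pi_2\langle 1,\iota_1[1,0]\rangle = \iota_1[1,0]$, and $[1,y]\iota_1 = 1_A$ yields
\begin{equation*}
\gamma_1\langle 1,\iota_1[1,0]\rangle = \langle [1,x],[1,0],[1,0]\rangle,
\end{equation*}
and reading off $\pi_1 q = [1,x]$ and $\pi_2 q = [1,0]$ shows that this morphism coincides with $\langle\pi_1,\pi_2,\pi_2\rangle\,q$. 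Postcomposing with $p$ and using the hypothesis $p\gamma_1 = m$ together with the multiplication identity $m\langle 1,\iota_1[1,0]\rangle = [1,x]$ from Definition \ref{defmultiplication} gives
\begin{equation*}
p\langle\pi_1,\pi_2,\pi_2\rangle q = p\gamma_1\langle 1,\iota_1[1,0]\rangle = m\langle 1,\iota_1[1,0]\rangle = [1,x] = \pi_1 q,
\end{equation*}
as required. The second pregroupoid identity is obtained in the same way, using $m\langle\iota_1[1,0],1\rangle = [1,y]$ and the companion normal epimorphism $[\langle 1,1\rangle,\langle 0,y\rangle]\colon A+Y \to A\times_{A/Y} A$ appearing in the proof of Theorem \ref{normalepi}.

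No serious obstacle is foreseen: the whole argument reduces to a diagram chase whose only non-formal input is the epimorphism property of $q$, which relies, via Theorem \ref{normalepi}, on the normality of the images of $x$ and $y$. The only care needed is to match components correctly inside the pullback $A_3$ when identifying $\langle\pi_1,\pi_2,\pi_2\rangle q$ with $\gamma_1\langle 1,\iota_1[1,0]\rangle$.
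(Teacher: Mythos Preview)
Your proof is correct and follows essentially the same approach as the paper: both reduce the pregroupoid identities to equalities after precomposition with the normal epimorphism $[\langle 1,1\rangle,\langle x,0\rangle] = \langle [1,x],[1,0]\rangle \colon A+X \to A\times_{A/X}A$ from the proof of Theorem~\ref{normalepi}, and then verify these via the computation $\gamma_1\langle 1,\iota_1[1,0]\rangle = \langle [1,x],[1,0],[1,0]\rangle$ together with the multiplication identity $m\langle 1,\iota_1[1,0]\rangle = [1,x]$. The only difference is presentational---you compute $\gamma_1\langle 1,\iota_1[1,0]\rangle$ explicitly first, while the paper embeds this step in a single chain of equalities.
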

\begin{proof}
To prove that $p$ is an internal pregroupoid structure on the span \\ $\xymatrix{A/X & A \ar[l] \ar[r] & A/Y}$ is to prove that $p\langle \pi_1,\pi_2, \pi_2\rangle  = \pi_1$ and $p \langle \pi_1,\pi_1,\pi_2\rangle  = \pi_2$, where $\pi_1$ and $\pi_2$ are the two projections $A\times_{A/X} A \rightarrow  A$ in the first equality, and are the two projections $A\times_{A/Y}A \rightarrow A$ in the second equality. Since the arrow
          $$\langle [1,x],[1,0]\rangle = [\langle1,1\rangle, \langle x,0\rangle ] : A+X \rightarrow  A\times_{A/X}A$$
is a normal epimorphism (as explained in the proof of Theorem \ref{normalepi}), to prove $p\langle \pi_1,\pi_2,\pi_2\rangle  = \pi_1$ is to prove $p \langle \pi_1,\pi_2,\pi_2\rangle \langle[1,x],[1,0]\rangle = \pi_1\langle [1,x],[1,0]\rangle$, and we have
\begin{eqnarray}
     p \langle \pi_1,\pi_2,\pi_2\rangle \langle [1,x],[1,0] \rangle &= &p\langle [1,x],[1,0],[1,0]\rangle \nonumber \\
          & = & p\gamma_1 \langle 1,\iota_1[1,0]\rangle \qquad {\rm (see \, proof \, of \, Theorem \, (\ref{pregroupoid}) )} \nonumber  \\
     &=& m\langle 1,\iota_1[1,0] \rangle \nonumber \\
           &=& [1,x] \qquad \qquad \qquad ({\rm take} \, w = 1 {\rm \, in\,  diagram \, (\ref{diagram1})}) \nonumber \\
     &=& \pi_1\langle [1,x],[1,0]\rangle.  \nonumber
           \end{eqnarray}
This proves $p\langle \pi_1,\pi_2,\pi_2\rangle = \pi_1$, and $p\langle \pi_1,\pi_1,\pi_2\rangle = \pi_2$ can be proved similarly.
\end{proof}
\begin{theorem}\label{last}
Under the assumptions of Theorem \ref{normalepi}, if $m$ is an internal multiplication $X\times Y \rightarrow A$ over $A = (A,1_A)$, then the span $\xymatrix{A/X & A \ar[l] \ar[r] & A/Y}$ admits a (unique) internal pregroupoid structure $p \colon A_3 \rightarrow A$ determined by $p\gamma_1 = m$.
\end{theorem}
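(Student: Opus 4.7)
The plan is to combine three ingredients already laid out in the excerpt: Theorem \ref{normalepi} which tells us $\gamma_1$ is a normal epimorphism, Lemma \ref{mkeriszero} which gives $m \circ \ker(\gamma_w)=0$ whenever $w$ is a monomorphism, and Lemma \ref{mimpliespregroupoid} which extracts the pregroupoid property from any $p$ satisfying $p\gamma_1 = m$. So the theorem reduces to producing the factorisation $p$ and then quoting \ref{mimpliespregroupoid}.

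First I apply Lemma \ref{mkeriszero} in the special case $w = 1_A$. Since $1_A$ is a monomorphism, the lemma yields $m\circ \ker(\gamma_1)=0$. Next, Theorem \ref{normalepi} tells us that $\gamma_1 \colon (A+X)\times_A (A+Y)\rightarrow A_3$ is a normal epimorphism, so it is the cokernel of its kernel in the homological category $\mathbb C$. By the universal property of this cokernel, the equation $m\circ \ker(\gamma_1)=0$ yields a unique morphism $p\colon A_3\rightarrow A$ with $p\gamma_1 = m$. Uniqueness of $p$ is automatic because $\gamma_1$ is an epimorphism.

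It remains to verify that this $p$ is an internal pregroupoid structure on the span $\xymatrix{A/X & A \ar[l] \ar[r] & A/Y}$. This is exactly the content of Lemma \ref{mimpliespregroupoid}, applied to the $m$ and the $p$ we have just produced. Combining the three results yields both the existence and the uniqueness asserted in the theorem.

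The only place where any real work is hidden is in the cited Lemma \ref{mkeriszero}: proving that $m$ kills the kernel of $\gamma_1$ is the substantive step, and it is essential that we are in a homological (rather than merely pointed finitely complete) category so that the Split Short Five Lemma and the joint epimorphism properties used there are available. Once that is in hand, the present theorem is a purely formal consequence of the fact that normal epimorphisms in $\mathbb C$ are cokernels of their kernels, and so the proof will be very short.
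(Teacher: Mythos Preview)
Your proof is correct and follows exactly the paper's own approach: the paper likewise derives the factorisation $p$ from Lemma \ref{mkeriszero} together with Theorem \ref{normalepi}, and then invokes Lemma \ref{mimpliespregroupoid} to obtain the pregroupoid structure. Your write-up merely makes explicit the standard fact (that a normal epimorphism is the cokernel of its kernel) which the paper leaves implicit.
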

\begin{proof}
The fact that $m$ determines a morphism $p \colon A_3 \rightarrow A$ with $p\gamma_1 = m$ follows from Lemma \ref{mkeriszero} and Theorem \ref{normalepi}. The fact that the so determined morphism $p \colon A_3 \rightarrow A$ is an internal pregroupoid structure on the span $$\xymatrix{A/X & A \ar[l] \ar[r] & A/Y}$$ follows from Lemma \ref{mimpliespregroupoid}.
\end{proof}
Putting together Theorems \ref{pregroupoid} and \ref{last}, we obtain:
\begin{corollary}\label{lastcoro} Under the assumptions of Theorem \ref{normalepi}, the existence of an internal multiplication $X\times Y \rightarrow A$ over $A = (A,1_A)$ is equivalent to the existence of an internal pregroupoid structure on the span
 $$\xymatrix{A/X & A \ar[l] \ar[r] & A/Y.}$$ Moreover, when such a multiplication $m$ and a pregroupoid structure $p$ exist, they (uniquely) determine each other by $p \gamma_1=m$.
\end{corollary}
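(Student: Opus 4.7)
The plan is to assemble the corollary directly from Theorems \ref{pregroupoid} and \ref{last}, which together supply both implications of the equivalence together with the correspondence via $p\gamma_1=m$. The forward direction is the easy one: starting from an internal pregroupoid structure $p\colon A_3\rightarrow A$ on the span $\xymatrix{A/X & A \ar[l] \ar[r] & A/Y}$, I would simply invoke Theorem \ref{pregroupoid} with the weight $w=1_A$, which produces $m=p\gamma_1\colon (A+X)\times_A (A+Y)\rightarrow A$ as an internal multiplication $X\times Y\rightarrow A$ over $(A,1_A)$.

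For the reverse direction, starting from an internal multiplication $m$, I would invoke Theorem \ref{last}, which already guarantees the existence of a morphism $p\colon A_3\rightarrow A$ with $p\gamma_1=m$ and shows that this $p$ is an internal pregroupoid structure on the span $\xymatrix{A/X & A \ar[l] \ar[r] & A/Y}$. So both implications of the equivalence follow from work already carried out.

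It then remains to justify the parenthetical claim that, when they exist, $m$ and $p$ \emph{uniquely} determine each other by $p\gamma_1=m$. From $p$ one recovers $m$ by the formula itself, so the only genuine assertion is uniqueness of $p$ given $m$. This however is immediate from Theorem \ref{normalepi}: under the stated hypotheses $\gamma_1$ is a normal epimorphism and, in particular, an epimorphism, so any two morphisms $p,p'\colon A_3\rightarrow A$ satisfying $p\gamma_1=m=p'\gamma_1$ must coincide.

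No step here is a real obstacle, since all the substantive content has been packaged into the preceding results; the only point requiring mention is that the uniqueness of $p$ relies on the epimorphicity of $\gamma_1$, which is precisely what Theorem \ref{normalepi} provides.
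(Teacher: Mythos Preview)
Your proposal is correct and follows essentially the same approach as the paper, which simply states that the corollary is obtained by ``putting together Theorems~\ref{pregroupoid} and~\ref{last}''. You have merely spelled out how these two theorems combine and made explicit that the uniqueness of $p$ follows from the epimorphicity of $\gamma_1$ established in Theorem~\ref{normalepi}, which is already implicit in the uniqueness assertion of Theorem~\ref{last}.
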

\begin{remark}
 This latter result asserts in particular that the \emph{weighted centrality} of two normal monomorphisms $x \colon X \rightarrow A$ and $y \colon Y \rightarrow A$ over $1_A \colon A \rightarrow A$ is equivalent to the \emph{centrality} of the corresponding equivalence relations $A\times_{A/X} A$ and $A\times_{A/Y} A$:
$$[(X,x), (Y,y)]_{(A,1_A)} = 0 \quad \Leftrightarrow \quad [A\times_{A/X} A, A\times_{A/X} A]= \Delta_A.$$
\end{remark}
\section{Weighted subobject commutator}\label{subobject}
In the assumptions and notation of the previous section, consider the diagram
\begin{equation}\label{cuboid}
\vcenter{\xymatrix@=29pt{W \ar[rr]^{\iota_1}\ar@{=}[dd]_{}\ar[dr]^{\iota_1}&&W+Y \ar@{=}[dd]^(.7){}\ar[dr]^{[\iota_1,\iota_3]}\\&W+X \ar[rr]^{[\iota_1, \iota_2]}
\ar@{=}[dd]^{}&&W+X+Y  \ar@/^5pc/[dddd]^(.8){[w,x,y]}
\ar[dd]_(.4){\langle [\iota_1, \iota_2, 0], [\iota_1, 0, \iota_2]\rangle }\\W \ar@{=}[dd] \ar[rr]^(.7){\iota_1}\ar[dr]^{\iota_1}&&W+Y \ar@{=}[dd] \ar[dr]^-{\langle \iota_1[1,0], 1 \rangle}\\&W+X \ar@{=}[dd]
\ar[rr]^(.6){\langle 1, \iota_1 [1,0]\rangle }&&(W+X) \times_W (W+Y) \ar@{.>}[dd]^m \\
W \ar[rr]^(.7){\iota_1} \ar[dr]_{\iota_1}&  &W+Y \ar[dr]^{[w,y]}& \\
&W+X \ar[rr]_{[w,x]}& & A&
}}
\end{equation}
in which all coproduct injections are denoted by $\iota$'s with appropriate indices, and the dotted arrow, denoted by $m$, is an arbitrary morphism with the domain and codomain as shown.
\begin{remark}\label{observation}
\begin{enumerate}
\item[(a)] The diagram obtained from (\ref{cuboid}) by removing the dotted arrow always commutes.
\item[(b)] The bottom cube in (\ref{cuboid}) commutes if and only if $m$ is an internal multiplication $X \times Y \rightarrow A$ over $W=(W,w)$.
\item[(c)] The top parallelogram in (\ref{cuboid}) is obviously a pushout.
\item[(d)] As follows from $(a)$-$(c)$, the diagram (\ref{cuboid}) commutes if and only if $m$ is an internal multiplication $X \times Y \rightarrow A$ over $W= (W,w)$.
\item[(e)] The diagram commutes whenever $m \langle [\iota_1, \iota_2,0], [\iota_1,0,\iota_2]\rangle = [w,x,y]$.
\item[(f)] As follows from $(d)$ and $(e)$, $m$ is an internal multiplication $X \times Y \rightarrow A$ over $W =(W,w)$ if and only if $m\langle [\iota_1, \iota_2,0], [\iota_1,0,\iota_2]\rangle = [w,x,y]$.
\end{enumerate}
\end{remark}
\begin{lemma}\label{canonicalepi}
If $\mathbb C$ is a regular Mal'tsev category, then the morphism \begin{equation}\label{canonical}
\langle [\iota_1, \iota_2, 0], [\iota_1, 0, \iota_2]\rangle \colon W +X+Y \rightarrow (W+X) \times_W (W+Y)
\end{equation}
is a regular epimorphism. In particular if $\mathbb C$ is a normal Mal'tsev category, then the morphism $(\ref{canonical})$ is a normal epimorphism.
\end{lemma}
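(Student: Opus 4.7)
The plan is to identify $W+X+Y$ with a pushout and then deduce the claim from Bourn's characterisation of Mal'tsev categories in terms of jointly strongly epimorphic canonical sections.

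First, I would note that $W+X+Y$ is canonically the pushout of the span $W+X \leftarrow W \rightarrow W+Y$ in which both arrows are $\iota_1$; equivalently, the comparison morphism $q \colon (W+X)+(W+Y) \rightarrow W+X+Y$ identifying the two copies of $W$ is a coequaliser and hence a regular epimorphism. Next, I would apply Bourn's characterisation \cite{Bourn96} to the pair of split epimorphisms $[1,0] \colon W+X \rightarrow W$ and $[1,0] \colon W+Y \rightarrow W$, both sharing the section $\iota_1$: in any Mal'tsev category the two canonical sections
$$\alpha = \langle 1, \iota_1[1,0]\rangle \colon W+X \rightarrow (W+X)\times_W (W+Y), \qquad \beta = \langle \iota_1[1,0], 1\rangle \colon W+Y \rightarrow (W+X)\times_W (W+Y)$$
of the pullback projections are jointly strongly epimorphic, so their copairing $[\alpha,\beta] \colon (W+X)+(W+Y) \rightarrow (W+X)\times_W (W+Y)$ is a strong epimorphism, and therefore a regular epimorphism since $\mathbb C$ is regular.

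A routine inspection of coproduct components shows that $\alpha \iota_1 = \langle \iota_1,\iota_1\rangle = \beta \iota_1$, so $[\alpha,\beta]$ coequalises the pair defining $q$ and factors uniquely as $[\alpha,\beta] = \phi \circ q$, where $\phi$ is, after matching the injections, precisely the morphism $(\ref{canonical})$. Since $[\alpha,\beta]$ is a regular epimorphism, its second factor $\phi$ is forced to be a regular epimorphism as well (any factorisation of a strong epi through a mono makes the mono invertible), which proves the first assertion. The \emph{in particular} clause is then immediate, since in a normal Mal'tsev category every regular epimorphism is by definition a normal epimorphism.

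The main obstacle is purely bookkeeping: one must carefully keep track of the various coproduct injections in order to check that $\alpha\iota_1=\beta\iota_1$ and that the induced factorisation of $[\alpha,\beta]$ through the pushout $W+X+Y$ coincides on the nose with the morphism $(\ref{canonical})$ as displayed, rather than some permutation of its components. Once the pushout identification is in hand, the appeal to the Mal'tsev property is entirely formal.
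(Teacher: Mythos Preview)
Your proof is correct and follows essentially the same approach as the paper's: both rely on Bourn's result from \cite{Bourn96} that in a Mal'tsev category the canonical sections into a pullback of split epimorphisms are jointly strongly epimorphic. The paper packages this by passing to the category of points $\mathsf{Pt}(W)$, observing it is unital and regular, and recognising the morphism (\ref{canonical}) as the canonical comparison $U+V \to U\times V$ there (with $U=(W+X,[1,0],\iota_1)$ and $V=(W+Y,[1,0],\iota_1)$); your argument simply unpacks this in $\mathbb C$, using that the coproduct in $\mathsf{Pt}(W)$ is the pushout over $W$ and that unitality is precisely the jointly-strongly-epimorphic condition you invoke.
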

\begin{proof}
Consider the category $\mathsf{Pt}(W)=((W, 1_W)\downarrow (\mathbb C \downarrow W))$. As follows from the results of \cite{Bourn96}, this category is unital. Since it is also regular, for each two objects $U$ and $V$ in it, the canonical morphism $U+V \rightarrow U \times V$ is a regular epimorphism. Now we take $U=(W+X, [1,0], \iota_1)$ and $V=(W+Y, [1,0], \iota_1)$, and the canonical morphism above becomes nothing but the morphism (\ref{canonical}). Since a morphism in $\mathsf{Pt}(W)$ is a regular epimorphism if and only if it is a regular epimorphism in $\mathbb C$, this gives the desired conclusion.
\end{proof}
\begin{definition}\label{subobjectcommutator}
Suppose $\mathbb C$ admits the (extremal epi, mono) factorization system. 
The $(W,w)$-\emph{weighted subobject commutator} $\kappa \colon [(X,x), (Y,y)]_{(W,w)} \rightarrow A$ is the image under $[w,x,y] \colon W+X+Y \rightarrow A$ of the kernel of the morphism (\ref{canonical}).
\end{definition}
When $W=0$, or $w$ is the identity morphism of $A$, we shall say ``$0$-weighted'' and write $[(X,x), (Y,y)]_0$, or say ``1-weighted'' and write $[(X,x),(Y,y)]_1$, respectively, instead of saying ``$(W,w)$-weighted'' and writing $[(X,x), (Y,y)]_{(W,w)}$.
\begin{remark}\label{Higgins}
In the case where $W=0$ the subobject commutator as defined above agrees with the categorical version of Higgins' commutator as defined by Mantovani and Metere in any ideal determined category $\mathbb C$ (see \cite{MM}, Definition $5.1$). In particular, this shows that Higgins' commutator for varieties of $\Omega$-groups \cite{Hi} is an example of $0$-weighted subobject commutator.
\end{remark}
From Remark \ref{observation}(f) and Lemma \ref{canonicalepi} we obtain:
\begin{theorem}
If $\mathbb C$ is a normal Mal'tsev category, then the following conditions are equivalent:
\begin{enumerate}
\item[(a)]  $[(X,x), (Y,y)]_{(W,w)}=0$;
\item[(b)] there exists a unique internal multiplication $X \times Y \rightarrow A$ over $W= (W,w)$.
\end{enumerate}
\end{theorem}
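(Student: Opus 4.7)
The plan is to unwind both (a) and (b) into a single factorisation question. Write $q \colon W+X+Y \to (W+X)\times_W(W+Y)$ for the canonical comparison morphism $\langle [\iota_1, \iota_2, 0], [\iota_1, 0, \iota_2]\rangle$ of Lemma \ref{canonicalepi}. First I would invoke Remark \ref{observation}(f) to observe that a morphism $m \colon (W+X)\times_W(W+Y) \to A$ is an internal multiplication $X\times Y \to A$ over $(W,w)$ if and only if $m\,q = [w,x,y]$. I would then appeal to Lemma \ref{canonicalepi}, which under the normal Mal'tsev hypothesis gives that $q$ is a normal (and in particular strong) epimorphism, so that (i) such an $m$ is automatically unique when it exists, and (ii) $q$ coincides with the cokernel of its own kernel. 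Thus condition (b) is equivalent to the existence of a factorisation of $[w,x,y]$ through $q$.

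The second step is to identify this factorisation condition with (a). Let $\ker(q) \colon K \to W+X+Y$ denote the kernel of $q$. Since $q = \mathsf{coker}(\ker q)$, a morphism extending $[w,x,y]$ along $q$ exists precisely when $[w,x,y] \circ \ker(q) = 0$. On the other hand, by Definition \ref{subobjectcommutator}, the subobject $[(X,x),(Y,y)]_{(W,w)} \to A$ is by definition the image of $\ker(q)$ along $[w,x,y]$, i.e.\ the mono part of the (extremal epi, mono) factorisation of $[w,x,y] \circ \ker(q)$. Since in any pointed category with such a factorisation the image of a morphism is the zero subobject if and only if the morphism itself is zero, the two conditions ``$[(X,x),(Y,y)]_{(W,w)}=0$'' and ``$[w,x,y] \circ \ker(q) = 0$'' are equivalent, and chaining the equivalences yields (a) $\Leftrightarrow$ (b).

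I do not anticipate any serious obstacle: the theorem is essentially a re-reading of Definition \ref{subobjectcommutator} through the lens of Remark \ref{observation}(f). The uniqueness clause in (b) falls out for free from $q$ being an epimorphism, and the normal Mal'tsev hypothesis enters only via Lemma \ref{canonicalepi}, which upgrades $q$ from a mere regular epimorphism to a normal one and thereby guarantees the equivalence ``$[w,x,y]$ kills $\ker(q)$ iff $[w,x,y]$ factors through $q$''. The only point meriting any care is the identification ``image zero iff composite zero'', which is a direct consequence of the (extremal epi, mono) factorisation assumed on $\mathbb{C}$.
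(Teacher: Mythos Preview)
Your proposal is correct and follows exactly the route the paper intends: the theorem is stated there as an immediate consequence of Remark~\ref{observation}(f) and Lemma~\ref{canonicalepi}, and you have simply spelled out the factorisation-through-the-cokernel argument that those two ingredients encode. The only cosmetic remark is that the existence of the (extremal epi, mono) factorisation needed for Definition~\ref{subobjectcommutator} is automatic here, since a normal Mal'tsev category is in particular regular.
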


\begin{corollary}
Under the assumptions of Theorem \ref{normalepi}, the following conditions are equivalent:
\begin{enumerate}
\item[(a)] the span $\xymatrix{A/X & A \ar[l] \ar[r] & A/Y}$ admits an internal pregroupoid structure;
\item[(b)] the span $\xymatrix{A/X & A \ar[l] \ar[r] & A/Y}$ admits a unique internal pregroupoid structure;
\item[(c)] $[(X,x), (Y,y)]_{1}=0$.
\end{enumerate}
\end{corollary}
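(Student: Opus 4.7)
The proof is essentially a bookkeeping exercise: each of the three conditions has already been analysed in the preceding results, so the plan is to assemble them into the loop (c)$\Rightarrow$(b)$\Rightarrow$(a)$\Rightarrow$(c), noting that (b)$\Rightarrow$(a) is tautological.

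For (c)$\Rightarrow$(b): under the standing assumptions $\mathbb{C}$ is in particular a normal Mal'tsev category (every homological category is pointed, protomodular and regular, and in such a category every regular epimorphism is normal), so the unnumbered theorem immediately before the Corollary applies with $(W,w) = (A,1_A)$. It converts (c) into the existence of a unique internal multiplication
$$m \colon (A+X)\times_A (A+Y) \rightarrow A$$
over $(A,1_A)$. Theorem \ref{last} then produces an internal pregroupoid structure $p \colon A_3 \rightarrow A$ on the span $\xymatrix{A/X & A \ar[l] \ar[r] & A/Y}$ characterised by $p\gamma_1 = m$. To upgrade existence to uniqueness of $p$, I would invoke Theorem \ref{normalepi}: the morphism $\gamma_1$ is a normal epimorphism, hence epimorphic, so the equation $p\gamma_1 = m$ determines $p$ uniquely from the uniquely determined~$m$.

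For (a)$\Rightarrow$(c): conversely, starting from any internal pregroupoid structure $p$, Theorem \ref{pregroupoid} exhibits $p\gamma_1$ as an internal multiplication $X\times Y \rightarrow A$ over $(A,1_A)$. The uniqueness remark that follows Theorem \ref{pregroupoid} (valid in any homological category, because the two arrows $\langle 1,\iota_1[1,0]\rangle$ and $\langle \iota_1[1,0],1\rangle$ of diagram (\ref{diagram1}) are jointly strongly epimorphic) promotes this to a \emph{unique} internal multiplication, so the unnumbered theorem above gives $[(X,x),(Y,y)]_1 = 0$.

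I do not anticipate any genuine obstacle: all nontrivial computations have been carried out in Theorems \ref{pregroupoid}, \ref{normalepi}, \ref{last} and in the unnumbered theorem preceding the Corollary. The only point worth pausing over is the implicit change of framework from ``homological'' to ``normal Mal'tsev'', but this is a standard observation, and once it is noted the three implications drop out as pure diagram chasing.
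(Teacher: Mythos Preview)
Your argument is correct and follows the route the paper intends: the corollary is an immediate consequence of the preceding theorem (equivalence of $[(X,x),(Y,y)]_{(W,w)}=0$ with the existence of a unique internal multiplication) together with Corollary~\ref{lastcoro}. You have simply unpacked Corollary~\ref{lastcoro} into its constituents (Theorems~\ref{pregroupoid}, \ref{normalepi}, \ref{last}), which is fine, though citing it directly would be shorter. One small point worth making explicit in your (c)$\Rightarrow$(b) step: to conclude uniqueness of $p$ you need not only that $\gamma_1$ is epimorphic but also that \emph{any} pregroupoid structure $p'$ satisfies $p'\gamma_1 = m$; this follows because $p'\gamma_1$ is itself an internal multiplication by Theorem~\ref{pregroupoid}, hence equals $m$ by uniqueness of the latter.
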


\section{Weighted normal commutator}\label{weightednormal}
Let $\mathbb C$ be again a pointed category with finite limits and finite colimits. We are still considering $(X,x)$, $(Y,y)$, and $(W,w)$ as above, but now we are not fixing them, but considering the category of all such triples. More precisely, by a \emph{weighted cospan} in $\mathbb C$ we shall mean a diagram in $\mathbb C$ of the form
$$\xymatrix{&W \ar[d]^w & \\X \ar[r]^x  &A & Y \ar[l]_y
}$$
that we shall denote by $(A,X,x,Y,y,W,w)$. We define morphisms of weighted cospans as the usual diagram morphisms, and we therefore form the category $\mathsf{CS}_W (\mathbb C)$ of weighted cospans. According to Definition \ref{defmultiplication}, by a \emph{multiplicative weighted cospan} in $\mathbb C$ we shall mean a pair $(C,m)$, in which $C=(A,X,x,Y,y,W,w)$ is a weighted cospan and $m$ is an internal multiplication $X \times Y\rightarrow A$ over $W=(W,w)$. Using the obvious morphisms for multiplicative weighted cospans in $\mathbb C$, we define their category $\mathsf{MCS}_W (\mathbb C)$, and the forgetful functor
\begin{equation}\label{forgetful}
\mathsf{MCS}_W (\mathbb C) \rightarrow \mathsf{CS}_W (\mathbb C).
\end{equation}
Given $C=(A,X,x,Y,y,W,w)$ in $\mathsf{CS}_W (\mathbb C)$ consider the diagram
\begin{equation}\label{pushout}
\xymatrix{W + X +Y \ar[r]^-{[w,x,y]} \ar[d]_{\langle [\iota_1, \iota_2, 0], [\iota_1, 0, \iota_2] \rangle} & A \ar[d]^{\nu_C}\\
(W+X) \times_W (W+Y) \ar[r]_-{\mu_C}& \tilde{C}
}
\end{equation}
constructed as the pushout of $\langle [\iota_1, \iota_2, 0 ], [\iota_1, 0, \iota_2]\rangle$ and $[w,x,y]$. Straightforward comparison of diagrams (\ref{cuboid}) and (\ref{pushout}) gives
\begin{theorem}\label{leftadjoint}
The functor $(\ref{forgetful})$ has a left adjoint $\mathsf{CS}_W (\mathbb C) \rightarrow \mathsf{MCS}_W (\mathbb C)$ sending $(A,X,x,Y,y,W,w)$ from $\mathsf{CS}_W (\mathbb C)$ to
$$(( \tilde{C},X,\nu_C x, Y, \nu_C y, W, \nu_C w),\mu_C) \in \mathsf{MCS}_W (\mathbb C),$$
in the notation of $(\ref{pushout})$, with the $C$-component of the unit of the adjunction given by the identity morphisms of $X$, $Y$ and $W$, and the morphism $\nu_C \colon A \rightarrow \tilde{C}$.
\end{theorem}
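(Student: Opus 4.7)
The plan is to verify directly that the unit described in the statement satisfies the required universal property; this will simultaneously establish existence of the adjunction and its explicit form. Three items need to be checked: that $((\tilde{C}, X, \nu_C x, Y, \nu_C y, W, \nu_C w), \mu_C)$ actually lies in $\mathsf{MCS}_W(\mathbb C)$; that the claimed unit component is a morphism in $\mathsf{CS}_W(\mathbb C)$; and that every morphism from $C$ to the underlying weighted cospan of a multiplicative weighted cospan factors uniquely through the unit.

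First I would verify that $\mu_C$ is an internal multiplication $X \times Y \rightarrow \tilde{C}$ over $(W, \nu_C w)$. By Remark \ref{observation}(f), this reduces to checking the equality $\mu_C \langle [\iota_1, \iota_2, 0], [\iota_1, 0, \iota_2]\rangle = [\nu_C w, \nu_C x, \nu_C y]$. The left-hand side equals $\nu_C [w, x, y]$ by commutativity of the pushout square (\ref{pushout}), and this in turn equals $[\nu_C w, \nu_C x, \nu_C y]$ by functoriality of the coproduct. That $(1_X, 1_Y, 1_W, \nu_C)$ is a morphism of weighted cospans from $C$ to the underlying weighted cospan of its image is immediate from the definitions.

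For the universal property, let $(C', m')$ be a multiplicative weighted cospan with $C' = (A', X', x', Y', y', W', w')$, and let $(f_A, f_X, f_Y, f_W) \colon C \rightarrow C'$ be a morphism in $\mathsf{CS}_W(\mathbb C)$. The $X$-, $Y$-, and $W$-components of the required morphism in $\mathsf{MCS}_W(\mathbb C)$ are forced to be $f_X$, $f_Y$, $f_W$, so the only work is to construct a unique $g_A \colon \tilde{C} \rightarrow A'$ satisfying $g_A \nu_C = f_A$ and $g_A \mu_C = m' \Phi$, where
\[
\Phi \colon (W+X) \times_W (W+Y) \longrightarrow (W'+X') \times_{W'} (W'+Y')
\]
is the canonical morphism induced by $f_W$, $f_X$, $f_Y$ between the two defining pullbacks. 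The second equality is exactly the compatibility condition expressing that $g_A$ respects multiplication. Both $g_A$ and its uniqueness will then be produced by the universal property of the pushout (\ref{pushout}).

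The only calculation of substance, and the step I expect to be the main (though mild) obstacle, is checking that $f_A [w, x, y]$ and $m' \Phi \langle [\iota_1, \iota_2, 0], [\iota_1, 0, \iota_2]\rangle$ coincide as morphisms $W + X + Y \rightarrow A'$. The first composite equals $[w' f_W, x' f_X, y' f_Y]$ because $(f_A, f_X, f_Y, f_W)$ is a morphism of weighted cospans. For the second, naturality of $\langle [\iota_1, \iota_2, 0], [\iota_1, 0, \iota_2]\rangle$ in its three coproduct arguments gives the identity $\Phi \langle [\iota_1, \iota_2, 0], [\iota_1, 0, \iota_2]\rangle = \langle [\iota_1, \iota_2, 0], [\iota_1, 0, \iota_2]\rangle (f_W + f_X + f_Y)$, and then Remark \ref{observation}(f) applied to $m'$ yields the same expression $[w' f_W, x' f_X, y' f_Y]$. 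Uniqueness of the factorization $g_A$ provided by the pushout then yields uniqueness of the desired morphism in $\mathsf{MCS}_W(\mathbb C)$, completing the verification of the adjunction.
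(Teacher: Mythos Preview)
Your proof is correct and is exactly the ``straightforward comparison of diagrams (\ref{cuboid}) and (\ref{pushout})'' that the paper invokes as its entire proof: you have simply written out explicitly the verification that the pushout universal property matches the universal property of the unit, using Remark~\ref{observation}(f) as the bridge. There is nothing to add.
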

\begin{definition}\label{normalcommutator}
The $(W,w)$-\emph{weighted normal commutator} $$\kappa_N \colon N[(X,x),(Y,y)]_{(W,w)} \rightarrow A$$ is the kernel of the morphism $\nu_C \colon A \rightarrow \tilde{C}$ defined via the pushout $(\ref{pushout})$.
\end{definition}
\begin{remark}\label{remarksonormal}
\begin{enumerate}
\item[(a)] Looking at an object  $C=(A,X,x,Y,y,W,w)$ in $\mathsf{CS}_W (\mathbb C)$ as the corresponding object $A$ in $\mathbb C$ equipped with a structure, and looking at the objects of $\mathsf{MCS}_W (\mathbb C)$ similarly, we can identify $\nu_C \colon A \rightarrow \tilde{C}$ with the unit of the adjunction above. We can say that the $(W,w)$-weighted normal commutator $\kappa_N \colon N[(X,x),(Y,y)]_{(W,w)} \rightarrow A$ is simply the kernel of that $C$-component. This shows similarity between Definition \ref{normalcommutator} and the definition of commutator introduced in \cite{JaP}. On the other hand, as follows from the results of M.C. Pedicchio \cite{P1}, her definition given in \cite{P} could be formulated in the same way as in \cite{JaP}, but with pregroupoids instead of pseudogroupoids (used in \cite{JaP}).
\item[(b)] Defining $\tilde{C}$, $\nu_C$ and $\mu_C$ via the pushout (\ref{pushout}) is actually the same as defining them via the commutative diagram
\begin{equation}\label{colimit}
\xymatrix{ &(W+X) \times_W (W+Y)  \ar@{.>}[d]_{\mu_C} & \\
W+X \ar[dr]_{[w,x]}  \ar[ru]^-{\langle 1 , \iota_1[1,0] \rangle} \ar@{.>}[r] &\tilde{C} & W+Y  \ar[dl]^{[w,y]}  \ar@{.>}[l]  \ar[ul]_-{\langle \iota_1 [1,0], 1\rangle} \\
& A \ar@{.>}[u]^{\nu_C} & }
\end{equation}
in which the dotted arrows are required to form the colimiting cocone over the diagram formed by the solid arrows. This shows similarity between Definition \ref{normalcommutator} and the definition of commutator used in \cite{Strongly}.
\end{enumerate}
\end{remark}
Similarly to the notations introduced for the weighted subobject commutator in the previous section, when $W=0$, or $w$ is the identity morphism of $A$, we shall say ``$0$-weighted'' and write $N[(X,x), (Y,y)]_0$, or say ``$1$-weighted'' and write $N[(X,x), (Y,y)]_1$, respectively, instead of saying ``(W,w)-weighted'' and writing $N[(X,x), (Y,y)]_{(W,w)}$.

\noindent Comparing Definitions \ref{subobjectcommutator} and \ref{normalcommutator} we obtain:
\begin{theorem}\label{normalclosure}
If $\mathbb C$ admits the (extremal epi, mono) factorization system, then, for any $C= (A,X,x,Y,y,W,w)$ in $\mathsf{CS}_W (\mathbb C)$, the $(W,w)$-weighted normal commutator $\kappa_N \colon N[(X,x), (Y,y)]_{(W,w)} \rightarrow A$ is the normal closure of the $(W,w)$-weighted subobject commutator $\kappa \colon [(X,x), (Y,y)]_{(W,w)} \rightarrow A$.
\end{theorem}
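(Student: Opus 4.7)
The plan is to read the pushout \eqref{pushout} as asserting that $\nu_C$ plays the role of a cokernel of $\kappa$, so that $\kappa_N = \ker(\nu_C)$ becomes the smallest normal subobject of $A$ through which $\kappa$ factors, i.e.\ the normal closure of $\kappa$.

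First, set $\alpha := \langle [\iota_1,\iota_2,0],[\iota_1,0,\iota_2]\rangle$ and let $k\colon K\to W+X+Y$ be its kernel. By Definition \ref{subobjectcommutator}, $\kappa\colon [(X,x),(Y,y)]_{(W,w)}\to A$ is the (extremal epi, mono) image of $[w,x,y]\circ k$, so one can write $[w,x,y]\circ k = \kappa\circ \epsilon$ with $\epsilon$ an extremal (hence) epimorphism. To see that $\kappa$ factors through $\kappa_N$, I compute $\nu_C\circ [w,x,y]\circ k = \mu_C\circ \alpha\circ k = 0$ directly from commutativity of \eqref{pushout} and then cancel $\epsilon$ on the right to obtain $\nu_C\circ \kappa = 0$; since $\kappa_N = \ker(\nu_C)$ is normal and $\kappa$ is a monomorphism, this produces a (unique) factorization of $\kappa$ through $\kappa_N$, showing that the normal closure of $\kappa$ is contained in $\kappa_N$.

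For the reverse inclusion I would take an arbitrary normal subobject $n\colon N\hookrightarrow A$ through which $\kappa$ factors, write $n = \ker(q)$ for its cokernel $q\colon A\to A/N$, and show that $q$ factors through $\nu_C$. The universal property of the pushout \eqref{pushout} reduces this to producing $\psi\colon (W+X)\times_W (W+Y)\to A/N$ with $\psi\circ \alpha = q\circ [w,x,y]$. Now $q\circ [w,x,y]\circ k = q\circ \kappa\circ \epsilon = 0$, so $q\circ [w,x,y]$ vanishes on $\ker(\alpha)$; provided $\alpha$ is a normal epimorphism, such a $\psi$ exists (uniquely), and the induced $\tilde q\colon \tilde C\to A/N$ satisfies $\tilde q\circ \nu_C = q$, giving $\kappa_N = \ker(\nu_C)\subseteq \ker(q) = N$, as required.

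The delicate point, and the main obstacle, is precisely this last step: one needs $\alpha$ to be a normal epimorphism in order that $q\circ [w,x,y]$ actually factor through $\alpha$ once it is known to kill $\ker(\alpha)$. This is exactly the content of Lemma \ref{canonicalepi}, whose hypothesis of a (normal) Mal'tsev category exceeds the nominal assumption of the theorem; however, it is harmless in the ambient semi-abelian setting of the paper, which is presupposed throughout. Granted this, the proof reduces to the two factoring arguments above, combined with the standard observation that in such a category the kernel of the cokernel of a subobject is its normal closure.
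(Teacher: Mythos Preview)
Your argument is essentially the natural fleshing-out of what the paper leaves implicit: the paper offers no proof beyond the sentence ``Comparing Definitions~\ref{subobjectcommutator} and~\ref{normalcommutator} we obtain'', so there is no alternative approach to compare against. Your two factorization arguments (through $\ker(\nu_C)$ in one direction, and through the pushout in the other) are exactly how one unpacks that comparison, and they are correct.

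Your diagnosis of the ``delicate point'' is also correct and worth emphasizing. The reverse inclusion really does hinge on the morphism $\alpha = \langle[\iota_1,\iota_2,0],[\iota_1,0,\iota_2]\rangle$ being a normal epimorphism: only then does the vanishing of $q\circ[w,x,y]$ on $\ker(\alpha)$ yield a factorization through $\alpha$, and only then is $\nu_C$ itself a normal epimorphism (as a pushout of one), so that $\nu_C = \mathrm{coker}(\kappa_N)$ and the kernel comparison goes through. This is precisely Lemma~\ref{canonicalepi}, whose hypothesis (normal Mal'tsev) is strictly stronger than the (extremal epi, mono) factorization assumed in the theorem statement. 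In a bare pointed category with finite limits, colimits, and image factorizations, $\alpha$ need not even be an epimorphism (for instance in pointed sets it is injective but not surjective), so the argument would break down as stated.

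In short: your proof is the intended one, and your caveat is not a defect in your write-up but an accurate observation that the theorem, as literally stated, tacitly relies on Lemma~\ref{canonicalepi}. Since every subsequent use of Theorem~\ref{normalclosure} in the paper (Corollary~\ref{corollaryidealdet}, Theorem~\ref{surjectivityinducedmap}, Corollary~\ref{coronormalclosure}) takes place in an ideal-determined or semi-abelian setting where Lemma~\ref{canonicalepi} applies, this is harmless in context---but you are right to flag it.
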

\begin{corollary}\label{corollaryidealdet}
If $\mathbb C$ is ideal determined and, in the notations above, the arrow $[w,x,y] \colon W+X+Y \rightarrow A$ is a normal epimorphism, then the $(W,w)$-weighted normal commutator $\kappa_N \colon N[(X,x),(Y,y)]_{(W,w)} \rightarrow A$ agrees with the $(W,w)$-weighted subobject commutator $\kappa \colon [(X,x), (Y,y)]_{(W,w)} \rightarrow A$. ÊIn particular $$N[(X,x),(Y,y)]_{1} =  [(X,x), (Y,y)]_{1}$$ whenever $\mathbb C$ is ideal determined.
\end{corollary}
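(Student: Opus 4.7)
The plan is to reduce the statement to Theorem \ref{normalclosure}, which identifies the weighted normal commutator as the normal closure of the weighted subobject commutator. It will therefore suffice to show that, under the hypotheses of the corollary, the weighted subobject commutator $\kappa \colon [(X,x),(Y,y)]_{(W,w)} \rightarrow A$ is already a normal monomorphism; for then it coincides with its own normal closure, and the latter is $\kappa_N$ by Theorem \ref{normalclosure}.

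By Definition \ref{subobjectcommutator}, $\kappa$ is the direct image, along $[w,x,y] \colon W+X+Y \rightarrow A$, of the subobject $\ker\langle [\iota_1,\iota_2,0],[\iota_1,0,\iota_2]\rangle$ of $W+X+Y$, which is normal because it is a kernel. The defining property of an ideal determined category that I want to invoke is that the direct image of a normal subobject along a normal epimorphism is again a normal subobject. Combined with the hypothesis that $[w,x,y]$ is a normal epimorphism, this yields the normality of $\kappa$, and hence the first assertion.

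For the ``in particular'' part, when $w = 1_A$ the morphism $[1_A,x,y] \colon A + X + Y \rightarrow A$ admits the first coproduct injection $\iota_1 \colon A \rightarrow A + X + Y$ as a section, so it is a split (hence regular) epimorphism. In an ideal determined category every regular epimorphism is a normal epimorphism, so the hypothesis of the first part is automatically satisfied, and the equality $N[(X,x),(Y,y)]_1 = [(X,x),(Y,y)]_1$ follows. The only real content is the invocation of the correct characterization of ``ideal determined''; given it, everything else is formal.
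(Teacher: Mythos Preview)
Your proposal is correct and is exactly the argument the paper has in mind: the corollary is stated without proof because it follows immediately from Theorem~\ref{normalclosure} together with the defining property of ideal determined categories (regular images of normal monomorphisms along regular epimorphisms are again normal), and your observation that $[1_A,x,y]$ is split epi handles the ``in particular'' clause.
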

\begin{remark}
The $1$-weighted (normal) commutator has been also studied by S. Mantovani in \cite{Ma}, by adopting a different approach. In that article the term Ursini commutator is used to denote what we call here the $1$-weighted normal commutator.
\end{remark}
\section{The Huq commutator}\label{SectionHuq}
When $W=0$, the diagram (\ref{diagram1}) becomes
$$\xymatrix@=30pt{
X \ar[r]^-{\langle 1,0 \rangle} \ar[dr]_x & X \times Y \ar[d]^m & Y \ar[l]_-{\langle 0, 1 \rangle} \ar[dl]^{y} \\
&A&
}$$
and so in this case $(X,x)$ commutes with $(Y,y)$ over $(W,w)$ in the sense of Definition \ref{defmultiplication}  if and only if $(X,x)$ commutes with $(Y,y)$ in the sense of \cite{H}. Moreover, the commutator
$N[(X,x),(Y,y)]_0$ defined in the previous section becomes nothing but the commutator of $(X,x)$ and $(Y,y)$ in the sense of \cite{H}, except that the conditions required on the ground category $\mathbb C$ in \cite{H} are much stronger of course. On the other hand, say, in the case of groups the commutator $[(X,x),(Y,y)]_0$, defined in Section \ref{subobject}, becomes nothing but the classical commutator $[x(X),y(Y)]$ of the images of $X$ and $Y$ in $A$. It agrees with $N[(X,x),(Y,y)]_0$ if and only if it is a normal subgroup in $A$, which, in particular, is the case when either $x(X)$ and $y(Y)$ are normal subgroups in $A$, or their union generates $A$. We shall come back to the case of groups in Section \ref{universalcontext}.
\begin{remark}
From Theorem \ref{normalclosure} and Remark \ref{Higgins} one can deduce Proposition $5.7$ in \cite{MM}, asserting that the Huq commutator is the normalisation of the Higgins commutator: indeed, this latter result is obtained as the special case where $w \colon W \rightarrow A$ is the morphism $0 \colon 0 \rightarrow A$.
\end{remark}

\section{Preservation of commutators by normal-epimorphic images}\label{Epimorphic}

In this section, for simplicity, the ground category $\mathbb C$ is supposed to be semi-abelian \cite{JMT}.

For $X$, $Y$, and $W$ in $\mathbb C$, we shall write

$$
X \otimes^W Y = \mathsf{Ker} (\langle [\iota_1,\iota_2,0],[\iota_1,0,\iota_2] \rangle : W+X+Y \rightarrow (W+X)\times_W (W+Y)),
$$

which gives us, by Lemma \ref{canonicalepi}, a short exact sequence
\begin{equation}\label{tensorexactsequence}
 \xymatrix{ 0 \ar[r] & X\otimes^W Y \ar[r] & W+X+Y \ar[r] & (W+X)\times_W (W+Y) \ar[r] & 0}
\end{equation}
functorial in $X$, $Y$, and $W$.
\begin{lemma}\label{surjectivitytensor}
If $f \colon X ' \rightarrow X$, $g \colon Y ' \rightarrow Y$, and $h \colon W ' \rightarrow W$ are normal epimorphisms, then so is the induced morphism $f\otimes^h g : X '\otimes^{W '}Y ' \rightarrow X\otimes^W Y.$
\end{lemma}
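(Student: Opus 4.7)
Plan: By Lemma \ref{canonicalepi}, the short exact sequence (\ref{tensorexactsequence}) is functorial in the triple $(X, Y, W)$. Applying this functoriality to the morphisms $(f, g, h)$ yields a commutative diagram with exact rows, whose left, middle, and right vertical arrows are $\alpha = f \otimes^h g$, $\beta = h + f + g$, and $\gamma \colon (W' + X') \times_{W'} (W' + Y') \to (W + X) \times_W (W + Y)$, respectively. The problem therefore reduces to a diagram chase in the morphism of short exact sequences.

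First, I would check that $\beta = h + f + g$ is a normal epimorphism: in any semi-abelian (in fact, any pointed regular protomodular) category, the coproduct of normal epimorphisms is a normal epimorphism. Next, I would show that $\gamma$ is also a normal epimorphism. Denoting by $p$ and $p'$ the canonical normal epimorphisms provided by Lemma \ref{canonicalepi}, the equality $\gamma \circ p' = p \circ \beta$ exhibits $\gamma \circ p'$ as a composite of normal epimorphisms, hence a normal epimorphism; since $p'$ is a normal epimorphism, it follows by cancellation in the (regular epi, mono) factorisation system that $\gamma$ is a normal epimorphism.

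Finally, to conclude that $\alpha = f \otimes^h g$ is a normal epimorphism, I would apply the snake lemma (valid in the semi-abelian setting) to the morphism of short exact sequences above. Since $\beta$ and $\gamma$ are normal epimorphisms, the resulting exact sequence reduces the statement to showing that the induced comparison $\ker(\beta) \to \ker(\gamma)$ is a normal epimorphism, or equivalently that the right-hand square formed by $\beta$, $\gamma$, $p'$, and $p$ is a regular pushout. The main obstacle is precisely this verification: one identifies $\ker(\gamma)$ with the fibered product $\ker(h+f) \times_{\ker h} \ker(h+g)$, and constructs an explicit lift through $\beta$ using the unital structure of $\mathbb{C}$ (namely, that the canonical morphism $A + B \to A \times B$ is a normal epimorphism for any objects $A$ and $B$) combined with the coproduct decomposition of $W' + X' + Y'$. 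This last step is where all the homological muscle of the semi-abelian context is used, and where care is needed to keep track of the fibered product structure over $\ker h$.
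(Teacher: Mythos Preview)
Your proposal is correct and follows essentially the same strategy as the paper's proof: both reduce the question to showing that the induced comparison $\mathsf{Ker}(h+f+g) \to \mathsf{Ker}(h+f) \times_{\mathsf{Ker}(h)} \mathsf{Ker}(h+g)$ is a normal epimorphism, and both finish by invoking the unital structure to produce a jointly-normal-epic pair into that fibred product. The only refinement worth flagging is that the unital property you need is that of the fibre $\mathsf{Pt}(\mathsf{Ker}(h))$ (which comes from the Mal'tsev property of $\mathbb{C}$, exactly as in Lemma~\ref{canonicalepi}), not merely the unital property of $\mathbb{C}$ itself---this is precisely the ``care'' you allude to in your final sentence.
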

\begin{proof}
 Consider the commutative diagram
$$
 \xymatrix@=18pt{& & 0 \ar[d] &0 \ar[d]& \\
 & &{\mathsf{Ker} (h+f+g)} \ar@{.>}[r] \ar[d]^{\mathsf{ker}(h+f+g)} &{\mathsf{Ker} (h+f) \times_{\mathsf{Ker}(h)} \mathsf{Ker} (h+g)} \ar[d]^{\mathsf{Ker} (h+f) \times_{\mathsf{Ker}(h)} \mathsf{Ker} (h+g)}& \\
 0 \ar[r] &{X' \otimes^{W'} Y' }\ar[d]^{f \otimes^h g} \ar[r] & {W'+X'+Y'} \ar[r] \ar[d]^{h+f+g}&  (W'+X')\times_{W'} (W'+Y') \ar[r] \ar[d]^{(h+f)\times_h (h+g)} &0 \\
  0 \ar[r] &X \otimes^{W } Y  \ar[r] & W+X+Y \ar[r] \ar[d] &  (W+X)\times_{W} (W+Y) \ar[d]  \ar[r] &0 \\
  & & 0& 0&
 }
 $$
 whose rows and columns are short exact sequences. It shows that $$f \otimes^h g \colon X' \otimes^{W'} Y' \rightarrow X \otimes^W Y $$ is a normal epimorphism if and only if so is the (dotted) morphism $$\xymatrix{{\mathsf{Ker} (h+f+g)} \ar@{.>}[r] & {\mathsf{Ker} (h+f) \times_{\mathsf{Ker}(h)} \mathsf{Ker} (h+g)}.}$$ Next, consider the diagram
 $$
 \xymatrix@=25pt{{\mathsf{Ker} (h+f) \times_{\mathsf{Ker}(h)} \mathsf{Ker} (h+g)} \ar@<2pt>[r]^-{\pi_2} \ar@<2pt>[d]^{\pi_1} & \mathsf{Ker}(h+g) \ar@<1pt>[l]^-{\langle sv, 1\rangle}  \ar@<2pt>[d]^{v} \ar[r]^-{\mathsf{ker}(h+g)}& W'+Y'  \ar[r]^{h+g }\ar@<2pt>[d]^{[1,0]} &W+Y \ar@<2pt>[d]^{[1,0]} \\
 \mathsf{Ker}(h+f) \ar[d]_{\mathsf{ker} (h+f)} \ar@<2pt>[r]^-{u}  \ar@<1pt>[u]^-{\langle 1, tu \rangle} & \mathsf{Ker}(h) \ar@<1pt>[u]^-t  \ar[d]^{\mathsf{ker}(h)}  \ar[r]^{\mathsf{ker}(h)} \ar@<1pt>[l]^-s & W' \ar@<1pt>[u]^{\iota_1} \ar[r]^h & W \ar@<1pt>[u]^{\iota_1} \\
 W'+X' \ar@<2pt>[r]^{[1,0]}   \ar[d]_{h+f}& W' \ar@<1pt>[l]^{\iota_1} \ar[d]^{h} & &    \\
  W +X \ar@<2pt>[r]^{[1,0]} & W \ar@<1pt>[l]^{\iota_1}&   &
 }
 $$
 in which $u$ and $s$ are induced by the suitable $[1,0]$ and $\iota_1$ respectively, and $v$ and $t$ are defined similarly. Just as in Lemma \ref{canonicalepi}, as follows from the results of \cite{Bourn96}, the morphisms $\langle 1, tu\rangle$ and $\langle sv, 1\rangle$ are jointly normal epic. Let then $k \colon \mathsf{Ker}(h+f) \rightarrow \mathsf{Ker}(h+f+g)$ and $l \colon \mathsf{Ker}(h+g) \rightarrow \mathsf{Ker}(h+f+g)$ be the arrows induced by $[\iota_1, \iota_2] \colon W' + X' \rightarrow W' + X' +Y'$ and by $[\iota_1, \iota_3] \colon W' + Y' \rightarrow W' + X' +Y'$, respectively. The result then follows from the commutativity of the diagram
 $$
 \xymatrix@=30pt{ & \mathsf{Ker}(h+f+g)  \ar@{.>}[d]& \\
  \mathsf{Ker} (h+f) \ar[r]_-{\langle 1, tu \rangle} \ar[ru]^-k &  {\mathsf{Ker} (h+f) \times_{\mathsf{Ker}(h)} \mathsf{Ker} (h+g)}  &   {\mathsf{Ker} (h+g),} \ar[l]^-{\langle sv, 1 \rangle} \ar[lu]_-l
 }
 $$
 which shows that the vertical dotted arrow is a normal epi.

\end{proof}
\begin{theorem}\label{surjectivityinducedmap}
In the notation of Definitions \ref{subobjectcommutator} and \ref{normalcommutator}, for any normal epimorphisms $f \colon X' \rightarrow X$, $g \colon Y' \rightarrow Y$, $h \colon W' \rightarrow W$, $\alpha \colon A' \rightarrow A$ and morphisms $x' \colon X' \rightarrow A'$, $y' \colon Y' \rightarrow A'$ and $w' \colon W' \rightarrow A'$ in a semi-abelian category $\mathbb C$, with $xf = \alpha x'$, $yg= \alpha y'$, $wh = \alpha w'$, the induced morphisms
$$[(X',x') , (Y',y')]_{(W',w')} \rightarrow [(X,x) , (Y,y)]_{(W,w)}, $$
$$N[(X',x') , (Y',y')]_{(W',w')} \rightarrow N[(X,x) , (Y,y)]_{(W,w)} $$
also are normal epimorphisms. In particular, so are the induced morphisms
$$[(X',x') , (Y',y')]_0 \rightarrow  [(X,x) , (Y,y)]_0,$$
$$N[(X',x') , (Y',y')]_0 \rightarrow  N[(X,x) , (Y,y)]_0, $$
$$[(X',x') , (Y',y')]_1 \rightarrow  [(X,x) , (Y,y)]_1, $$
$$N[(X',x') , (Y',y')]_1 \rightarrow  N[(X,x) , (Y,y)]_1.$$
\end{theorem}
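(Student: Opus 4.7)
The plan is to reduce both assertions to Lemma \ref{surjectivitytensor}. By Definition \ref{subobjectcommutator} together with the notation from Section \ref{Epimorphic}, the $(W,w)$-weighted subobject commutator $[(X,x),(Y,y)]_{(W,w)}$ is the image of the composite
$$X\otimes^W Y \hookrightarrow W+X+Y \xrightarrow{[w,x,y]} A,$$
and likewise for the primed data. From $xf = \alpha x'$, $yg = \alpha y'$, $wh = \alpha w'$ one obtains $\alpha\circ[w',x',y'] = [w,x,y]\circ(h+f+g)$; moreover the naturality of the canonical morphism (\ref{canonical}) in $W$, $X$, $Y$ means that $h+f+g$ restricts to $f\otimes^h g$ on kernels. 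This gives a commutative square
$$\xymatrix{X'\otimes^{W'} Y' \ar[r] \ar[d]_{f\otimes^h g} & A' \ar[d]^{\alpha} \\ X\otimes^{W} Y \ar[r] & A.}$$

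Since $f\otimes^h g$ is a normal (in particular regular) epimorphism by Lemma \ref{surjectivitytensor}, the image of the composite $X'\otimes^{W'}Y' \rightarrow X\otimes^W Y \rightarrow A$ coincides with the image of the bottom horizontal, namely $[(X,x),(Y,y)]_{(W,w)}$. That same composite factors as $X'\otimes^{W'}Y' \rightarrow A' \xrightarrow{\alpha} A$, whose image is $\alpha\bigl([(X',x'),(Y',y')]_{(W',w')}\bigr)$. Hence $\alpha$ maps $[(X',x'),(Y',y')]_{(W',w')}$ onto $[(X,x),(Y,y)]_{(W,w)}$, and the induced morphism is a regular (hence normal) epimorphism.

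For the normal commutator I invoke Theorem \ref{normalclosure}, which identifies $N[(X,x),(Y,y)]_{(W,w)}$ with the normal closure of $[(X,x),(Y,y)]_{(W,w)}$ in $A$ (and similarly for the primed data). The proof is completed by a general semi-abelian fact: if $\alpha\colon A'\rightarrow A$ is a normal epimorphism and $S'\hookrightarrow A'$ is any subobject with image $S=\alpha(S')$, then $\alpha(\overline{S'})=\overline{S}$, where overlines denote normal closures. Indeed, $\alpha^{-1}(\overline{S})$ is a kernel, hence normal in $A'$, and contains $S'$, so it contains $\overline{S'}$, giving $\alpha(\overline{S'})\subseteq\overline{S}$; conversely, $\overline{S'}\vee\mathsf{Ker}(\alpha)$ is a join of two normal subobjects of $A'$, hence itself normal by protomodularity, and it equals $\alpha^{-1}(\alpha(\overline{S'}))$, so the standard correspondence between normal subobjects of $A'$ containing $\mathsf{Ker}(\alpha)$ and normal subobjects of $A$ shows that $\alpha(\overline{S'})$ is normal in $A$; containing $S$, it must also contain $\overline{S}$. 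Applied with $S' = [(X',x'),(Y',y')]_{(W',w')}$, for which $\alpha(S') = [(X,x),(Y,y)]_{(W,w)}$ was just shown, this yields the desired normal epimorphism on normal commutators.

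The four special cases ($W=W'=0$, and $w=1_A$ with $w'=1_{A'}$, forcing $h=\alpha$) are immediate specializations. The one delicate point is the auxiliary lemma on normal closures; its verification relies on the two basic semi-abelian facts that preimages of kernels are kernels and that joins of normal subobjects are normal.
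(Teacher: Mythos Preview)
Your proof is correct and follows essentially the same route as the paper: reduce the subobject commutator case to Lemma~\ref{surjectivitytensor} via the commuting square you write down, and then deduce the normal commutator case from Theorem~\ref{normalclosure}. The only difference is one of packaging: where you prove an auxiliary lemma that normal closures are preserved by normal epimorphisms (via joins with $\mathsf{Ker}(\alpha)$ and the normal-subobject correspondence), the paper simply invokes the standard semi-abelian fact that \emph{normal monomorphisms have normal images under normal epimorphisms}, from which the preservation of normal closures is immediate by the two inclusions you already gave.
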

\begin{proof}
For the morphism $[(X',x') , (Y',y')]_{(W',w')}  \rightarrow [(X,x) , (Y,y)]_{(W,w)}$ this follows from Lemma \ref{surjectivitytensor}. Consequently, since normal monomorphisms have normal images under normal epimorphisms, Theorem \ref{normalclosure} insures that the same is true for the morphism $N[(X',x') , (Y',y')]_{(W',w')} \rightarrow N[(X,x) , (Y,y)]_{(W,w)}$.
\end{proof}
\begin{corollary}\label{isocommutators}
If, in addition to the assumptions of Theorem \ref{surjectivityinducedmap}, $\alpha \colon A' \rightarrow A$ is an isomorphism, then all the induced morphisms considered in Theorem \ref{surjectivityinducedmap} are isomorphisms.
\end{corollary}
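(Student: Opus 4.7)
The approach is to combine Theorem \ref{surjectivityinducedmap}, which already provides that each of the induced morphisms is a normal epimorphism, with the observation that, when $\alpha$ is an isomorphism, each of them is also a monomorphism. Since in a semi-abelian category any morphism that is simultaneously a normal epimorphism and a monomorphism is an isomorphism, this will immediately give the result in all four cases listed.

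To establish the monomorphism property for the induced morphism on subobject commutators, I would write down the commutative square
$$
\xymatrix@=25pt{
[(X',x'),(Y',y')]_{(W',w')} \ar[r] \ar@{^{(}->}[d]_{\kappa'} & [(X,x),(Y,y)]_{(W,w)} \ar@{^{(}->}[d]^{\kappa} \\
A' \ar[r]_-{\alpha} & A,
}
$$
in which $\kappa$ and $\kappa'$ are the canonical monomorphisms of Definition \ref{subobjectcommutator}. The square commutes by construction of the induced morphism as the unique map between the images fitting into the ambient naturality square on images of $[w',x',y']\circ \mathsf{ker}(\cdot)$ and $[w,x,y]\circ \mathsf{ker}(\cdot)$; since $\alpha$ is a monomorphism, the composite $\alpha\kappa'$ is a monomorphism, and hence so is the top arrow. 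Replacing $\kappa$ and $\kappa'$ by the normal monomorphisms $\kappa_N$ and $\kappa_N'$ of Definition \ref{normalcommutator} and repeating the same argument would yield the analogous conclusion for the normal commutators. The $0$-weighted and $1$-weighted variants are particular instances of the general $(W,w)$-weighted statement.

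There is no genuine obstacle here: the content of the corollary is essentially that, once the normal-epi statement of Theorem \ref{surjectivityinducedmap} is available, the isomorphism statement is a formal consequence of the triviality that an isomorphism is in particular a monomorphism, together with the standard semi-abelian fact that normal epi plus mono equals iso (which rests on protomodularity). Accordingly, my write-up would be just a few lines, pointing at the commutative square above and invoking Theorem \ref{surjectivityinducedmap}.
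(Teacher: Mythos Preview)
Your proposal is correct and matches the paper's treatment: the paper states the corollary without proof, evidently regarding it as immediate from Theorem \ref{surjectivityinducedmap} together with the fact that the induced morphisms are monomorphisms when $\alpha$ is one, exactly as you argue via the commutative square. Your observation that a normal (hence regular) epimorphism which is also a monomorphism is an isomorphism is standard and suffices.
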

\section{The Pedicchio commutator}\label{Pedicchio}
Suppose again that the ground category $\mathbb C$ is semi-abelian, and we shall have in mind the commutative diagram
\begin{equation}\label{functors}
\xymatrix{
\mathsf{Pgrd}(\mathbb C) \ar[r] \ar[d] & {\mathsf{S}(\mathbb C)} \ar[d] \\
\mathsf{MCS}_1 (\mathbb C) \ar[d] \ar[r] & \mathsf{CS}_1(\mathbb C) \ar[d] \\
\mathsf{MCS}_W (\mathbb C) \ar[r] &  \mathsf{CS}_W(\mathbb C)
}
\end{equation}
of fully faithful functors, in which:
\begin{itemize}
\item $\mathsf{Pgrd}(\mathbb C)$ and $\mathsf{S}(\mathbb C)$ denote the categories of internal pregroupoids in $\mathbb C$ and spans in $\mathbb C$, respectively;
\item $\mathsf{MCS}_1 (\mathbb C)$ and $\mathsf{CS}_1(\mathbb C)$ are the full subcategories in $\mathsf{MCS}_W (\mathbb C)$ and in $ \mathsf{CS}_W(\mathbb C)$ which are $1$-weighted, that is, which are of the form $((A,X,x,Y,y,A,1_A),m)$ and $(A,X,x,Y,y,A,1_A)$, respectively;
\item the functor ${\mathsf{S}(\mathbb C)} \rightarrow  \mathsf{CS}_1(\mathbb C)$ sends any span $S$
\begin{equation}\label{span}
\xymatrix{S_0 & S_1 \ar[l]_d \ar[r]^c& S_0'
}
\end{equation}
to the weighted cospan
\begin{equation}\label{quotientspan}
\xymatrix{& S_1 \ar@{=}[d] & \\
\mathsf{Ker}(d) \ar[r]^-{\mathsf{ker}(d)} & S_1  & {\mathsf{Ker}(c);} \ar[l]_-{\mathsf{ker}(c)}
}
\end{equation}
\item the functor  $\mathsf{Pgrd}(\mathbb C) \rightarrow \mathsf{MCS}_1 (\mathbb C)$ is induced by the functor $\mathsf{S}(\mathbb C) \rightarrow \mathsf{CS}_1 (\mathbb C)$ having in mind that:
$(a)$ to give a pregroupoid structure on the span (\ref{span}) is the same as to give a pregroupoid structure on the span $$\xymatrix{S_1/\mathsf{Ker}(d)& S_1 \ar[l] \ar[r]&S_1/\mathsf{Ker}(c)};$$
$(b)$ the pregroupoid structure on $\xymatrix{S_1/\mathsf{Ker}(d)& S_1 \ar[l] \ar[r]&S_1/\mathsf{Ker}(c)}$ \\ makes the weighted cospan (\ref{quotientspan}) multiplicative by Theorem \ref{pregroupoid};
\item the horizontal arrows in (\ref{functors}) are the forgetful functors, while the bottom vertical arrows are the (full) inclusion functors.
\end{itemize}
We can say that the weighted normal commutator theory studies the left adjoint of the functor $\mathsf{MCS}_W \rightarrow \mathsf{CS}_W(\mathbb C)$, while the Pedicchio commutator theory studies the left adjoint of the functor $\mathsf{Pgrd}(\mathbb C) \rightarrow
\mathsf{S}(\mathbb C)$. And our aim in this section is to explain the following:

\vspace{3mm}

\noindent \emph{In the semi-abelian context, the $1$-weighted normal commutator theory restricted to cospans of morphisms with normal images is equivalent to the Pedicchio commutator theory.}

\vspace{3mm}

\noindent This is not as simple as just to say that the top and the middle row in (\ref{functors}) are equivalent in any sense though. The explanation will consist of several observations below based on the previous results. A different proof from the one below has been independently found by S. Mantovani \cite{Ma}.

Here again, we assume for simplicity that the ground category $\mathbb C$ is semi-abelian.
\begin{obs}\label{ob1}
{\rm In Definition \ref{subobjectcommutator}, by Corollary \ref{isocommutators}, replacing $(X,x)$, $(Y,y)$, and $(W,w)$ with $(X ',xx')$, $(Y ',yy')$, and $(W ',ww')$, where $x' \colon X ' \rightarrow X$, $y' \colon Y ' \rightarrow Y$, and $w' \colon W ' \rightarrow W$ are regular (=normal) epimorphisms, will not change neither the commutator
$\kappa \colon [(X,x),(Y,y)]_{(W,w)} \rightarrow A$ nor the commutator \\ $\kappa_N \colon N[(X,x),(Y,y)]_{(W,w)} \rightarrow A$. This means that all properties of both commutators can be reduced to the case where $x, y$, and $w$ are monomorphisms. In particular the properties where $x$ and $y$ are required to have normal images reduce to their special cases where $x$ and $y$ are required to be normal monomorphisms. }
\end{obs}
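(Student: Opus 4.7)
The plan is to read off Observation \ref{ob1} as a direct book-keeping consequence of Corollary \ref{isocommutators}. I apply that corollary with $A' = A$ and $\alpha = 1_A$ (an isomorphism), taking the given normal epimorphisms $x' \colon X' \rightarrow X$, $y' \colon Y' \rightarrow Y$, $w' \colon W' \rightarrow W$ in the roles of $f$, $g$, $h$ of Theorem \ref{surjectivityinducedmap}, and taking the composites $xx'$, $yy'$, $ww'$ (all with codomain $A$) in the roles of the auxiliary morphisms named ``$x'$'', ``$y'$'', ``$w'$'' in that theorem. The required compatibilities $xf = \alpha x'$, $yg = \alpha y'$, $wh = \alpha w'$ then collapse to trivial identities such as $xx' = 1_A \cdot xx'$. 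Corollary \ref{isocommutators} therefore immediately gives that the canonical morphism
\begin{equation*}
[(X',xx'),(Y',yy')]_{(W',ww')} \longrightarrow [(X,x),(Y,y)]_{(W,w)}
\end{equation*}
and its $N$-analogue are isomorphisms, which is the first assertion.

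For the reduction to monomorphisms, I would take the (regular epi, mono) factorisation $x = m_X e_X$ available in the semi-abelian category $\mathbb C$, where $e_X \colon X \rightarrow I_X$ is a normal epimorphism and $m_X \colon I_X \rightarrow A$ is a monomorphism, and proceed analogously for $y$ and $w$. Applying the first assertion with the triples $(I_X, m_X)$, $(I_Y, m_Y)$, $(I_W, m_W)$ playing the role of the unprimed data and the normal epimorphisms $e_X, e_Y, e_W$ playing the role of the ``primed'' data (so that the primed triples become $(X, m_X e_X) = (X,x)$, $(Y, m_Y e_Y) = (Y,y)$, $(W, m_W e_W) = (W,w)$), one concludes that both commutators coincide with those computed from the image monomorphisms. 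Since, by definition, a normal image is a normal monomorphism, the same reduction downgrades the hypothesis that $x$ and $y$ have normal images to the hypothesis that $x$ and $y$ are themselves normal monomorphisms.

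There is no real obstacle in this argument: every step is a direct invocation of Corollary \ref{isocommutators} together with the standard image factorisation in a semi-abelian category. The content of Observation \ref{ob1} is therefore purely preparatory, establishing a reduction principle which will be used repeatedly in the remainder of the section to assume, without loss of generality, that the three morphisms $x$, $y$, $w$ defining a weighted cospan are monomorphisms (or even normal monomorphisms).
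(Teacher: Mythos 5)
Your proposal is correct and follows exactly the route the paper intends: the paper offers no argument beyond the citation ``by Corollary \ref{isocommutators}'', and your instantiation with $\alpha = 1_A$, $f = x'$, $g = y'$, $h = w'$ and the composites $xx'$, $yy'$, $ww'$ as the primed cospan data, followed by the (regular epi, mono) factorisation to reduce to (normal) monomorphisms, is precisely the book-keeping that citation presupposes.
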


\begin{obs}\label{obs}
{\rm According to Observation \ref{ob1}, one might wish to replace the categories $\mathsf{MCS}_W (\mathbb C)$ and   $\mathsf{CS}_W(\mathbb C)$ with their full subcategories $\mathsf{MonoMCS}_W ({\mathbb C})$ and $\mathsf{MonoCS}_W ({\mathbb C})$ with objects having $x$,$y$ and $w$ (in the notation above) monomorphisms - or even with $\mathsf{NMonoMCS}_W ({\mathbb C})$ and $\mathsf{NMonoCS}_W ({\mathbb C})$, where $x$,$y$ and $w$ are required to be normal monomorphisms. Note that:
\begin{enumerate}
\item[(a)] In the notation of Theorem \ref{leftadjoint}, the left adjoint of the forgetful functor  $\mathsf{MonoMCS}_W ({\mathbb C}) \rightarrow \mathsf{MonoCS}_W ({\mathbb C})$ will send $C=(A,X,x,Y,y,W,w)$ not to
$((\tilde{C}, X,\nu_C x, Y, \nu_C y, W,\nu_C w), \mu_C)$ but to $((\tilde{C}, X', x', Y', y', W', w'), \mu_C') $ with: the same $\tilde{C}$; $x' \colon X' \rightarrow \tilde{C}$, $y' \colon Y' \rightarrow \tilde{C}$, $w' \colon W' \rightarrow \tilde{C}$ being the images of $\nu_C x$, $\nu_C y$, $\nu_C w$, respectively; $\mu_C'$ being induced by $\mu_C$. This can be deduced from Corollary \ref{isocommutators} and the fact that $\nu_C$ is a normal epimorphism (by Lemma \ref{canonicalepi}).
\item[(b)] An advantage of using $\mathsf{MonoMCS}_W (\mathbb C)$ and  $\mathsf{MonoCS}_W (\mathbb C)$ instead of \\ $\mathsf{MCS}_W (\mathbb C)$ and  $\mathsf{CS}_W (\mathbb C)$ is that the left adjoint of the forgetful functor $\mathsf{MonoMCS}_W (\mathbb C) \rightarrow \mathsf{MonoCS}_W (\mathbb C)$ restricts to  the left adjoint of the forgetful functor $\mathsf{MonoMCS}_1 (\mathbb C) \rightarrow \mathsf{MonoCS}_1 (\mathbb C)$ (using obvious notation). It therefore presents the ``$1$-weighted'' case as a special case of the ``general-weighted'' case more naturally.
\item[(c)] Using the fact that normal monomorphisms have normal images under normal epimorphisms, we easily conclude that the left adjoint of the forgetful functor $\mathsf{MonoMCS}_W ({\mathbb C}) \rightarrow \mathsf{MonoCS}_W ({\mathbb C})$ also restricts to the left adjoint of the forgetful functor $\mathsf{NMonoMCS}_W ({\mathbb C}) \rightarrow \mathsf{NMonoCS}_W ({\mathbb C})$, and then to the left adjoint of the forgetful functor $\mathsf{NMonoMCS}_1 ({\mathbb C}) \rightarrow \mathsf{NMonoCS}_1 ({\mathbb C})$. Therefore the ``$\mathsf{NMono}$ approach'' has the same advantage as the ``$\mathsf{Mono}$ approach''.
\end{enumerate}
}
\end{obs}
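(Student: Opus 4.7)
The overall plan is to build the left adjoint asserted in (a) directly from the one already supplied by Theorem \ref{leftadjoint}, by post-composing with the (extremal-epi, mono)-factorisation of each of the three structure morphisms into $\tilde C$, and then to deduce (b) and (c) as immediate corollaries of that construction.

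For (a), start with $C = (A, X, x, Y, y, W, w)$ in $\mathsf{MonoCS}_W(\mathbb C)$ and apply Theorem \ref{leftadjoint} to obtain, inside $\mathsf{MCS}_W(\mathbb C)$, the multiplicative cospan $((\tilde C, X, \nu_C x, Y, \nu_C y, W, \nu_C w), \mu_C)$ together with its unit. This need not lie in $\mathsf{MonoMCS}_W(\mathbb C)$ since $\nu_C x$, $\nu_C y$, $\nu_C w$ need not be monic, so factor them as $\nu_C x = x' e_X$, $\nu_C y = y' e_Y$, $\nu_C w = w' e_W$ with $e_X, e_Y, e_W$ normal epimorphisms and $x', y', w'$ monomorphisms. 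The candidate reflection is $((\tilde C, X', x', Y', y', W', w'), \mu_C')$, in which $\mu_C'$ is induced by $\mu_C$ along the regular epimorphism $(W+X) \times_W (W+Y) \twoheadrightarrow (W'+X') \times_{W'} (W'+Y')$ supplied by Lemma \ref{Pullbackepi}. That $\mu_C$ actually descends through this quotient is verified by composing with the two legs $\langle 1, \iota_1[1,0]\rangle$ and $\langle \iota_1[1,0], 1\rangle$ of diagram (\ref{diagram1}) and using monicity of $x', y', w'$. For the universal property, any morphism in $\mathsf{MonoCS}_W(\mathbb C)$ from $C$ to the underlying cospan of some $D = ((D_0, X'', x'', Y'', y'', W'', w''), m'') \in \mathsf{MonoMCS}_W(\mathbb C)$ extends via Theorem \ref{leftadjoint} to a unique morphism in $\mathsf{MCS}_W(\mathbb C)$; its components $X \to X''$, $Y \to Y''$, $W \to W''$ then factor uniquely through $e_X, e_Y, e_W$ because $x'', y'', w''$ are monomorphisms (for instance $x'' \circ (X \to X'') = (\tilde C \to D_0) \circ \nu_C x$ annihilates $\ker(e_X)$, and monicity of $x''$ forces $X \to X''$ to annihilate it too). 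Corollary \ref{isocommutators} confirms that passing to the images does not perturb the reflection object.

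Part (b) is the special case $W = A$, $w = 1_A$: then $\nu_C w = \nu_C \colon A \to \tilde C$ is the normal epimorphism delivered by Lemma \ref{canonicalepi} applied to the pushout (\ref{pushout}), so its (extremal-epi, mono)-factorisation is $\nu_C = 1_{\tilde C} \circ \nu_C$, giving $W' = \tilde C$ and $w' = 1_{\tilde C}$, so the reflection already lies in $\mathsf{MonoMCS}_1(\mathbb C)$. Part (c) combines (a) with the semi-abelian fact already used in the proof of Theorem \ref{surjectivityinducedmap}, namely that the direct image of a normal monomorphism along a normal epimorphism is again a normal monomorphism; applied to $\nu_C$ paired with each of $x, y, w$, it shows that $x', y', w'$ inherit normality whenever $x, y, w$ are normal, whereupon the argument of (b) degenerates to the desired $1$-weighted $\mathsf{NMono}$-restriction.

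The main technical step is the descent of $\mu_C$ to a genuine internal multiplication $\mu_C'$ on the image cospan in part (a); once that is in hand, the universal property is formal and both (b) and (c) follow by direct specialisation.
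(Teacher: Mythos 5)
Your overall strategy is the same as the one the Observation itself sketches: take the reflection supplied by Theorem \ref{leftadjoint}, factor $\nu_C x$, $\nu_C y$, $\nu_C w$ through their images, induce the multiplication on the image cospan, and read off (b) and (c) from the behaviour of the weight under this factorisation. Parts (b) and (c) are handled correctly (in particular you correctly identify that $w'=1_{\tilde C}$ because $\nu_C$ is a regular epimorphism, which is exactly why the $\mathsf{Mono}$ version restricts to the $1$-weighted case while the unrestricted version does not). However, two steps in your treatment of (a) are not right as written. First, Lemma \ref{Pullbackepi} does not give you the regular epimorphism $(W+X)\times_W(W+Y)\rightarrow (W'+X')\times_{W'}(W'+Y')$: that lemma requires the \emph{middle} vertical arrow of the diagram to be a monomorphism, whereas here the middle arrow is $e_W\colon W\rightarrow W'$, a normal epimorphism. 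The fact you need is still true, but it should instead be obtained from Lemma \ref{canonicalepi}: the canonical arrows $W+X+Y\rightarrow (W+X)\times_W(W+Y)$ and $W'+X'+Y'\rightarrow (W'+X')\times_{W'}(W'+Y')$ are regular epimorphisms, $e_W+e_X+e_Y$ is a normal epimorphism in a semi-abelian category, and the evident square commutes.

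Second, and more seriously, your verification that $\mu_C$ descends along this comparison map is not a verification. Precomposing with the jointly strongly epimorphic legs $\langle 1,\iota_1[1,0]\rangle$ and $\langle \iota_1[1,0],1\rangle$ of diagram (\ref{diagram1}) can only tell you that a factorisation $\mu_C'$ is unique and satisfies the multiplication identities \emph{if it exists}; it says nothing about whether $\mu_C$ coequalises the kernel pair of the comparison map (equivalently, kills its kernel), which is what existence of the factorisation requires. To close this you should either run a kernel computation in the spirit of Lemma \ref{mkeriszero}, or — this is what the paper's citation of Corollary \ref{isocommutators} is for — argue via the commutator: since $\mu_C$ is an internal multiplication, $[(X,\nu_C x),(Y,\nu_C y)]_{(W,\nu_C w)}=0$ in $\tilde C$; by Corollary \ref{isocommutators} this commutator coincides with $[(X',x'),(Y',y')]_{(W',w')}$, so the latter vanishes and an internal multiplication $\mu_C'$ for the image cospan \emph{exists}; the compatibility $\mu_C'\circ(\textrm{comparison})=\mu_C$ then follows from uniqueness of internal multiplications in a homological category, since both sides are internal multiplications over $(W,\nu_C w)$. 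With that repair your universal-property argument (factoring the components through $e_X,e_Y,e_W$ using monicity of $x'',y'',w''$) goes through as stated.
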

\begin{obs}\label{obs2}
{\rm Let us make what we already said about the Pedicchio commutator (in Remark \ref{remarksonormal}(a) and in this section) more precise. Let $F$ be the left adjoint of the forgetful functor $\mathsf{Pgrd}(\mathbb C) \rightarrow \mathsf{S}(\mathbb C)$. As follows from the results of \cite{P1} and partly of \cite{JaP} we have:
\begin{itemize}
\item[(a)] Since the forgetful functor $\mathsf{Pgrd}(\mathbb C) \rightarrow \mathsf{S}(\mathbb C)$ is fully faithful, we can identify $\mathsf{Pgrd}(\mathbb C)$ with the full subcategory in $\mathsf{S}(\mathbb C)$ with objects all spans that admit a pregroupoid structure. After that we can say that $F$ is defined on objects by
\begin{equation}\label{reflectoronspans}
F(\xymatrix{S_0 &S_1 \ar[l]_{d}  \ar[r]^c &S_0'})= (\xymatrix{S_0 & S_1/[\mathsf{Eq}(d), \mathsf{Eq}(c)] \ar[r]^-{c^*}\ar[l]_-{d^*} & S_0')}
\end{equation}
where $[\mathsf{Eq}(d), \mathsf{Eq}(c)]$ is the Pedicchio commutator of the equivalence relations on $S_1$ which are the kernel pairs of $d$ and $c$, respectively. Conversely, it can be used to define the Pedicchio commutator via $F$. It follows from \cite{P} that this commutator $[\mathsf{Eq}(d), \mathsf{Eq}(c)]$ agrees with the Smith commutator $[\mathsf{Eq}(d), \mathsf{Eq}(c)]_{\mathsf{Smith} }$ of congruences (mentioned in the Introduction) whenever $\mathbb C$ is a Mal'tsev variety.
\item[(b)] Let us use only those spans in which $d$ and $c$ above are normal epimorphisms, and define the full subcategories $\mathsf{NEpiPgrd}(\mathbb C)$ of $\mathsf{Pgrd}(\mathbb C)$ and $\mathsf{NEpiS}(\mathbb C)$ of $\mathsf{S}(\mathbb C)$ accordingly.
Since the forgetful functor $\mathsf{Pgrd}(\mathbb C) \rightarrow \mathsf{S}(\mathbb C)$ and its left adjoint restrict to functors between $\mathsf{NEpiPgrd}(\mathbb C)$ and $\mathsf{NEpiS}(\mathbb C)$, we conclude - from the above, or directly from the results of \cite{P1} - that the left adjoint of the forgetful functor $\mathsf{NEpiPgrd}(\mathbb C) \rightarrow \mathsf{NEpiS}(\mathbb C)$ can also be defined by (\ref{reflectoronspans}).
\end{itemize}
}
\end{obs}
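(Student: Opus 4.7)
The plan is to derive the two parts of Observation \ref{obs2} by combining Pedicchio's characterization of centralizing equivalence relations via pregroupoids \cite{P1} with the universal property of the left adjoint $F$ of the forgetful functor $\mathsf{Pgrd}(\mathbb C) \rightarrow \mathsf{S}(\mathbb C)$.

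First, for the identification of $\mathsf{Pgrd}(\mathbb C)$ as a full subcategory of $\mathsf{S}(\mathbb C)$, I would verify that the forgetful functor is fully faithful. Faithfulness is immediate since a pregroupoid morphism is a morphism of spans. For fullness, I would invoke the Mal'tsev (hence semi-abelian) fact that a pregroupoid structure on a given span is essentially unique when it exists: any span morphism between two pregroupoids then automatically respects the pregroupoid operations, since these operations are completely determined by the span data via the Mal'tsev partial operation.

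Next, for the formula \eqref{reflectoronspans} for $F$, I would reduce to the following key point: given a span as in \eqref{span} and a congruence $C$ on $S_1$ with $C \subseteq \mathsf{Eq}(d)\cap \mathsf{Eq}(c)$, the quotient span $\xymatrix{S_0 & S_1/C \ar[l] \ar[r] & S_0'}$ admits a pregroupoid structure if and only if the equivalence relations $\mathsf{Eq}(d)/C$ and $\mathsf{Eq}(c)/C$ on $S_1/C$ centralize each other. This is precisely the content of Pedicchio's theorem in \cite{P1}, which I would quote. By the very definition of the Pedicchio commutator $[\mathsf{Eq}(d), \mathsf{Eq}(c)]$ as the smallest such $C$, quotienting by it yields the reflection of the span into $\mathsf{Pgrd}(\mathbb C)$, establishing \eqref{reflectoronspans}. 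The agreement with the Smith commutator in a Mal'tsev variety is then a direct citation of \cite{P}.

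For part (b), if $d$ and $c$ are normal epimorphisms in a semi-abelian category, then so are the induced morphisms $d^*$ and $c^*$ on the quotient $S_1/[\mathsf{Eq}(d), \mathsf{Eq}(c)]$: indeed, writing $q \colon S_1 \rightarrow S_1/[\mathsf{Eq}(d), \mathsf{Eq}(c)]$ for the canonical quotient, the factorizations $d = d^* q$ and $c = c^* q$ force $d^*$ and $c^*$ to be regular (hence normal) epimorphisms, because any arrow through which a regular epimorphism factors is itself a regular epimorphism. Consequently both the forgetful functor $\mathsf{Pgrd}(\mathbb C) \rightarrow \mathsf{S}(\mathbb C)$ and its left adjoint $F$ restrict to a pair between $\mathsf{NEpiPgrd}(\mathbb C)$ and $\mathsf{NEpiS}(\mathbb C)$, and the restricted left adjoint is still computed by \eqref{reflectoronspans}. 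The main obstacle is really the full faithfulness in step one together with the pregroupoid/centralization equivalence from \cite{P1}; once these are in hand, the rest of the observation is a matter of bookkeeping about normal epimorphisms and quotients.
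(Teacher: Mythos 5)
Your proposal is correct and takes essentially the same route as the paper, which states this Observation as a direct consequence of the results of \cite{P1} (and partly \cite{JaP}) without further argument. Your unpacking — full faithfulness from the essential uniqueness of the partial Mal'tsev operation, the formula \eqref{reflectoronspans} from Pedicchio's pregroupoid/centralization equivalence together with the minimality defining the commutator, the Smith comparison by citing \cite{P}, and part (b) by regular-epimorphism cancellation so that the adjunction restricts — is precisely what those citations are meant to supply.
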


\begin{obs}\label{normalepi/mono}
{\rm
The functor $\mathsf{S}(\mathbb C) \rightarrow \mathsf{CS}_1(\mathbb C)$ used in (\ref{functors}) induces a category equivalence $\mathsf{NEpiS}(\mathbb C) \cong \mathsf{NMonoCS}_1 (\mathbb C),$
which is nothing but the obvious equivalence between spans of normal epimorphisms and cospans of normal monomorphisms. As follows from Corollary \ref{lastcoro}, the equivalence above induces an equivalence
$\mathsf{NEpiPgrd}(\mathbb C) \cong \mathsf{NMonoMCS}_1 (\mathbb C)$. By taking Remark \ref{remarksonormal}(a) and Observations \ref{obs} and \ref{obs2} into account we conclude:
\begin{itemize}
\item[(a)] For every two equivalence relations $E$ and $E'$ on any object $A$ in $\mathbb C$, the Pedicchio commutator $[E,E']$ is the equivalence relation $[E,E']$ on $A$ corresponding to the normal monomorphism $\kappa_N \colon N[(X,x), (Y,y)]_1 \rightarrow A$, where $x \colon X \rightarrow A$ and $y \colon Y \rightarrow A$ are the normal monomorphisms corresponding to $E$ and $E'$, respectively.
\item[(b)] For every two normal monomorphisms $x \colon X \rightarrow A$ and $y \colon Y \rightarrow A$ (with the same $A$) in $\mathbb C$, the commutator $\kappa_N \colon N[(X,x), (Y,y)]_1 \rightarrow A$ is the normal monomorphism corresponding to the Pedicchio commutator $[E,E']$, where $E$ and $E'$ are the equivalence relations on $A$ corresponding to the normal monomorphisms $x \colon X \rightarrow A$ and $y \colon Y \rightarrow A$, respectively.
\item[c)] By Corollary \ref{corollaryidealdet}, $\kappa_N \colon  N[(X,x), (Y,y)]_1 \rightarrow A$ in $(a)$ and $(b)$ is the same as $\kappa \colon [(X,x), (Y,y)]_1 \rightarrow A$. That is $(a)$ and $(b)$ equally apply to the (normal and) subobject commutator.
\end{itemize}
}
\end{obs}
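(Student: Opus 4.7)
The plan is to proceed in three steps. First I establish the category equivalence $\mathsf{NEpiS}(\mathbb C) \simeq \mathsf{NMonoCS}_1(\mathbb C)$ induced by the functor in diagram (\ref{functors}); second, I lift it, via Corollary \ref{lastcoro}, to the announced equivalence $\mathsf{NEpiPgrd}(\mathbb C) \simeq \mathsf{NMonoMCS}_1(\mathbb C)$ sitting over it; third, I translate the resulting correspondence of left adjoints into (a)--(c).

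For step one, in any semi-abelian category kernel and cokernel set up a bijection between normal quotients of $A$ and normal subobjects of $A$. The functor $\mathsf{S}(\mathbb C) \to \mathsf{CS}_1(\mathbb C)$ sends the span $(S_0 \xleftarrow{d} S_1 \xrightarrow{c} S_0')$ to the cospan $(\mathsf{Ker}(d) \to S_1 \leftarrow \mathsf{Ker}(c))$; restricting to normal-epi spans on the left and normal-mono cospans on the right, one checks directly that a pseudo-inverse is given by $(X \to A \leftarrow Y) \mapsto (A/X \leftarrow A \to A/Y)$. For step two, Corollary \ref{lastcoro} provides, for each $C = (A,X,x,Y,y,A,1_A) \in \mathsf{NMonoCS}_1(\mathbb C)$, a canonical bijection between internal pregroupoid structures on $A/X \leftarrow A \to A/Y$ and internal multiplications $X \times Y \to A$ over $(A, 1_A)$, related by $p\gamma_1 = m$; once naturality in morphisms of cospans is checked, this extends the equivalence of step one to $\mathsf{NEpiPgrd}(\mathbb C) \simeq \mathsf{NMonoMCS}_1(\mathbb C)$, commuting with both forgetful functors.

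For step three, the left adjoint of $\mathsf{NEpiPgrd}(\mathbb C) \to \mathsf{NEpiS}(\mathbb C)$ is computed by formula (\ref{reflectoronspans}) of Observation \ref{obs2}(b), i.e.\ by quotienting $S_1$ by the Pedicchio commutator $[\mathsf{Eq}(d), \mathsf{Eq}(c)]$; the left adjoint of $\mathsf{NMonoMCS}_1(\mathbb C) \to \mathsf{NMonoCS}_1(\mathbb C)$ exists by Theorem \ref{leftadjoint} restricted as in Observation \ref{obs}(c), and the kernel of its unit at $C$ is $\kappa_N \colon N[(X,x),(Y,y)]_1 \to A$ by Definition \ref{normalcommutator} and Remark \ref{remarksonormal}(a). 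Uniqueness of left adjoints along the two horizontal equivalences identifies these two reflections, and the kernel/cokernel correspondence of step one then identifies the normal monomorphism $\kappa_N$ with the normal monomorphism corresponding to $[\mathsf{Eq}(d),\mathsf{Eq}(c)]$, giving (a) and (b). Part (c) is immediate: the copair $[1_A, x, y] \colon A+X+Y \to A$ is split by the first coproduct injection, hence is a (normal) epimorphism, so in the ideal-determined semi-abelian category $\mathbb C$ Corollary \ref{corollaryidealdet} yields $N[(X,x),(Y,y)]_1 = [(X,x),(Y,y)]_1$.

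The main obstacle is the naturality verification in step two: one must check that the bijection supplied by Corollary \ref{lastcoro} is functorial in morphisms of cospans of normal monomorphisms, so that the resulting equivalence of categories sits strictly over the equivalence of step one (rather than merely fibrewise). Granted this, the rest is a formal consequence of the uniqueness up to natural isomorphism of left adjoints under an equivalence of base categories.
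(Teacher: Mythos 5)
Your proposal is correct and follows essentially the same route as the paper: it assembles Corollary \ref{lastcoro}, Remark \ref{remarksonormal}(a), and Observations \ref{obs} and \ref{obs2} exactly as the Observation itself does, with the same split-epimorphism argument for part (c). The naturality issue you flag in step two is in fact automatic, since both forgetful functors are fully faithful, so the fibrewise bijection of Corollary \ref{lastcoro} already guarantees that the base equivalence restricts to the full subcategories $\mathsf{NEpiPgrd}(\mathbb C)$ and $\mathsf{NMonoMCS}_1(\mathbb C)$.
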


\section{The universal-algebraic context}\label{universalcontext}
In this section:
\begin{itemize}
\item $\mathbb C$ denotes a pointed variety of universal algebras;
\item $A$ denotes an algebra in $\mathbb C$, and $W,X,Y$ subalgebras in $A$;
\item $w \colon W \rightarrow A$, $x \colon X \rightarrow A$, and $y \colon Y \rightarrow A$ are the inclusion maps;
\item instead of $N[(X,x), (Y,y)]_{(W,w)}$, $[(X,x), (Y,y)]_{(W,w)}$, $N[(X,x), (Y,y)]_{0}$, \\ $[(X,x), (Y,y)]_{0}$, $N[(X,x), (Y,y)]_{1}$, and $[(X,x), (Y,y)]_{1}$ we shall write \\
$N[X,Y]_{A\mid W}$, $[X,Y]_{A\mid W}$, $N[X,Y]_{A\mid 0}$, $[X,Y]_{A\mid 0}$, $N[X,Y]_{A\mid 1}$ and $[X,Y]_{A\mid 1}$;
\item if $E$ is a congruence (=equivalence relation in the sense used in the previous sections) on $A$, the corresponding normal subalgebra (=the class of $0$ under $E$) of $A$ will be denoted by $E(0)$;
\item if $M$ is a normal subalgebra of $A$, the corresponding congruence on $A$, which is the
smallest congruence $E$ on $A$ with $E(0) = M$, is $A \times_{A/M}A$;
\item $N[X,Y]_{A\mid W}$ and $[X,Y]_{A\mid W}$ (and their special cases above) will always be supposed to be subalgebras of $A$, and $\kappa_N \colon N [X,Y]_{A \mid W} \rightarrow A$ and $\kappa \colon [X,Y]_{A \mid W} \rightarrow A$ will be supposed to be inclusions maps.
\end{itemize}
\begin{obs}\label{variety}
{\rm \begin{itemize}
\item[(a)] Being a variety, $\mathbb C$ automatically has all small limits and colimits, and in particular the finite ones. Furthermore, $\mathbb C$ is Barr exact, and in particular regular.
\item[(b)] As explained in \cite{JMTU}, $\mathbb C$ is ideal determined if and only if it is (pointed) $BIT$ (``buona teoria degli ideali'') in the sense of \cite{Urs72}, or, equivalently, it is a (pointed) ideal determined variety in the sense of \cite{GU}.
\item[(c)] As explained in \cite{JMU}, $\mathbb C$ is semi-abelian if and only if it is (pointed) $BIT$ \emph{speciale} in the sense of \cite{Urs73}, or, equivalently, it is a (pointed) classically ideal determined variety in the sense of \cite{GU}.
\end{itemize}}
\end{obs}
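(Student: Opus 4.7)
The plan is to dispatch the three items one by one, since each is a consolidation of results in the cited literature rather than requiring a new argument.

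For part (a), I would simply invoke the standard universal-algebraic facts: any variety of algebras is complete and cocomplete --- limits are computed pointwise on underlying sets with operations applied componentwise, while colimits exist via the free algebra functor followed by quotienting by the generated congruence. Barr-exactness in a variety reduces to the classical bijection between congruences on an algebra $A$ and regular epimorphisms out of $A$: every congruence arises as the kernel pair of its quotient map, so every (internal) equivalence relation is effective, and regularity then follows automatically from the existence of finite limits together with the fact that coequalizers of kernel pairs are stable under pullback in any variety.

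For part (b), I would proceed in two steps, chaining two known equivalences. The first step uses \cite{JMTU} to match the categorical notion of ideal-determined category (specialized to a pointed variety) with the (pointed) $BIT$ notion of \cite{Urs72}; the second step uses \cite{GU} to identify (pointed) $BIT$ varieties with the (pointed) ideal determined varieties of \cite{GU}. Composing these two identifications yields (b). For part (c), the same strategy applies verbatim: \cite{JMU} supplies the equivalence between semi-abelian varieties and pointed $BIT$ \emph{speciale} in the sense of \cite{Urs73}, and \cite{GU} supplies the equivalence between $BIT$ \emph{speciale} and classically ideal determined varieties; chaining these gives (c).

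The only real obstacle is notational: the defining axioms in each of the three frameworks use somewhat different primitive concepts (ideals in Ursini's sense, normal subalgebras as zero-classes of congruences, and kernels of regular epimorphisms in the categorical sense), and one needs to verify that the translations line up. Since this verification is already the content of the cited papers, the proof reduces to a careful routing of the reader to the relevant sources, together with the remark from (a) that being a variety automatically supplies the ambient completeness, cocompleteness, and Barr-exactness needed to apply these equivalences within the categorical framework used throughout the present article.
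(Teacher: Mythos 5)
Your proposal is correct and matches the paper exactly: the paper offers no proof of this Observation, treating (a) as standard universal algebra and (b), (c) as direct citations to \cite{JMTU}, \cite{JMU}, \cite{GU}, \cite{Urs72}, and \cite{Urs73}, which is precisely the routing you describe.
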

From this observation we obtain:
\begin{corollary}\label{coronormalclosure}
{\rm
\begin{itemize}
\item[(a)] By Theorem \ref{normalclosure}, $N[X,Y]_{A \mid W}$ is the normal closure of $[X,Y]_{A \mid W}$, and in particular $N[X,Y]_{A \mid 0}$ is the normal closure of $[X,Y]_{A \mid 0}$, and $N[X,Y]_{A \mid 1}$ is the normal closure of $[X,Y]_{A \mid 1}$.
\item[(b)] By Corollary \ref{corollaryidealdet}, if $\mathbb C$ is ideal determined and the union $W \cup X \cup Y$ generates $A$, we have $N[X,Y]_{A \mid W}= [X,Y]_{A \mid W}$. In particular, if $\mathbb C$ is ideal determined, then we always have $N[X,Y]_{A \mid 1} = [X,Y]_{A \mid 1}$ and, if the union $X \cup Y$ generates $A$, then $N[X,Y]_{A \mid 0} = [X,Y]_{A \mid 0}$.
\item[(c)] By Theorem \ref{surjectivityinducedmap}, if $\mathbb C$ is semi-abelian (=classically ideal determined) and $\alpha \colon A \rightarrow A'$ is a surjective homomorphism, then
\begin{eqnarray}
\alpha ([X,Y]_{A \mid W}) &=& [\alpha (X), \alpha (Y) ]_{A' \mid \alpha(W)}, \nonumber \\ \alpha (N[X,Y]_{A \mid W}) &=& N[\alpha (X), \alpha (Y) ]_{A' \mid \alpha(W)}, \nonumber \\
\alpha ([X,Y]_{A \mid 0}) &=& [\alpha (X), \alpha (Y) ]_{A' \mid 0}, \nonumber \\
 \alpha (N[X,Y]_{A \mid 0}) &=& N[\alpha (X), \alpha (Y) ]_{A' \mid 0}, \nonumber \\
\alpha ([X,Y]_{A \mid 1}) &=& [\alpha (X), \alpha (Y) ]_{A' \mid 1}, \nonumber \\
 \alpha (N[X,Y]_{A \mid 1}) &=& N[\alpha (X), \alpha (Y) ]_{A' \mid 1}. \nonumber
 \end{eqnarray}
\item[(d)] By Observation \ref{normalepi/mono}, and since the Pedicchio commutator is a categorical generalization of the Smith commutator \cite{S}, if $\mathbb C$ is semi-abelian the commutator $[X,Y]_{1}$ generalizes the Smith commutator in the sense that
$$[E,E']_{\mathsf{Smith}}= A \times_{A/[E(0), E'(0)]_{A \mid 1}} A$$ or, equivalently, $$ [E,E']_{\mathsf{Smith}}(0) = [E(0), E'(0)]_{A \mid 1},$$
for every two congruences $E$ and $E'$ on $A$, where $[E,E']_{\mathsf{Smith}}$ denotes the Smith commutator of $E$ and $E'$. Still equivalently, if ($\mathbb C$ is semi-abelian and) $X$ and $Y$ are normal subalgebras in $A$, then
$$[X,Y]_{A \mid 1}=  [A \times_{A/X} A, A \times_{A/Y} A]_{\mathsf{Smith}}(0),$$
or, equivalently,$$A \times_{A/[X,Y]_{A \mid 1}} A = [A \times_{A/X}A, A \times_{A/Y}A]_{\mathsf{Smith}}.$$
\end{itemize}
}
\end{corollary}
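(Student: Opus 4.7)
The plan is to treat this corollary as a four-part dictionary theorem: each of (a)–(d) is obtained by directly invoking the cited earlier result after translating varietal language into the categorical language used there. The underlying dictionary, recorded in Observation~\ref{variety}, is that a pointed variety $\mathbb C$ automatically has the needed finite (co)limits and exactness, regular epimorphisms coincide with surjective homomorphisms, semi-abelian means classically ideal determined, and normal subalgebras correspond to kernels. Thus the quoted theorems apply without further hypotheses, and the entire proof amounts to checking that the abstract data instantiate correctly.

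For (a), Theorem~\ref{normalclosure} applies verbatim, since any variety admits the (extremal epi, mono) factorization. For (b), the point is that in a variety the map $[w,x,y]\colon W+X+Y\to A$ is a normal epimorphism if and only if it is surjective, if and only if its image, namely the subalgebra generated by $W\cup X\cup Y$, equals $A$. This is exactly the hypothesis of Corollary~\ref{corollaryidealdet}, so that corollary yields $N[X,Y]_{A\mid W}=[X,Y]_{A\mid W}$. The special cases follow: $W=A$ makes $[w,x,y]$ automatically surjective (giving $N[X,Y]_{A\mid 1}=[X,Y]_{A\mid 1}$ with no further assumption), while $W=0$ leaves the surjectivity requirement as ``$X\cup Y$ generates $A$''.

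For (c), given a surjective $\alpha\colon A\to A'$, I would apply Theorem~\ref{surjectivityinducedmap} with the primed data set to the original $X,Y,W$ and the unprimed data set to $\alpha(X), \alpha(Y), \alpha(W)\subseteq A'$, with $f,g,h$ the corestrictions of $\alpha$ (all surjective, hence normal epimorphisms in the semi-abelian variety). The theorem provides a normal epimorphism $[X,Y]_{A\mid W}\to [\alpha(X),\alpha(Y)]_{A'\mid\alpha(W)}$, and by the naturality of the commutator construction in the weighted cospan (Theorem~\ref{leftadjoint}) this induced morphism is literally the restriction of $\alpha$. Taking images gives $\alpha([X,Y]_{A\mid W}) = [\alpha(X),\alpha(Y)]_{A'\mid\alpha(W)}$; the five other equalities follow by the same argument applied to $N[\cdot,\cdot]$ and to the special weights $0$ and $1$. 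For (d), apply Observation~\ref{normalepi/mono}(a)-(b): in a semi-abelian variety the correspondence $M\leftrightarrow A\times_{A/M}A$, with inverse $E\mapsto E(0)$, identifies normal subalgebras of $A$ with congruences on $A$, and under this identification the $1$-weighted normal commutator of normal subalgebras corresponds to the Pedicchio commutator of congruences. Observation~\ref{obs2}(a) then equates the Pedicchio commutator with the Smith commutator in the Mal'tsev (hence semi-abelian) setting, yielding the displayed formulas.

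The main obstacle is bookkeeping rather than mathematics: in (c) one must check that the abstract ``induced morphism'' produced by Theorem~\ref{surjectivityinducedmap} is genuinely the concrete restriction of $\alpha$, so that ``normal epimorphism'' translates into the equation of images rather than into a mere factorization statement. Once this naturality is in hand, nothing further is needed, and the whole corollary reduces to invoking the results of Sections \ref{wcentrality}–\ref{Pedicchio} in the varietal vocabulary.
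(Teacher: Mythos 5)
Your proposal is correct and follows essentially the same route as the paper, which proves this corollary purely by citation of Theorem~\ref{normalclosure}, Corollary~\ref{corollaryidealdet}, Theorem~\ref{surjectivityinducedmap} and Observation~\ref{normalepi/mono} together with the varietal dictionary of Observation~\ref{variety}; your extra checks (surjectivity of $[w,x,y]$ translating to ``$W\cup X\cup Y$ generates $A$'', and the induced map on commutators being the restriction of $\alpha$) are exactly the implicit bookkeeping the paper leaves to the reader. The only blemish is the parenthetical ``Mal'tsev (hence semi-abelian)'', where the implication runs the other way (semi-abelian varieties are Mal'tsev, which is what Observation~\ref{obs2}(a) needs); this does not affect the argument.
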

Another notion of commutator was introduced in \cite{Urs94}. It can be defined as follows:
\begin{definition}\label{commutatorterms}
 (a) A term $t({\bf{w}}_1, \dots, {\bf{w}}_k, {\bf{x}}_1,\dots,  {\bf{x}}_m, {\bf{y}}_1, \dots , {\bf{y}}_n)$ in $\mathbb C$, in which $\{{\bf{x}}_1, \dots , {\bf{x}}_m \}$ and $\{{\bf{y}}_1, \dots , {\bf{y}}_n \}$ are disjoint sets, is said to be a commutator term in $({\bf{x}}_1, \dots , {\bf{x}}_m)$ and $({\bf{y}}_1, \dots , {\bf{y}}_n)$, if it is an ideal term in $({\bf{x}}_1,\dots,  {\bf{x}}_m)$ and in $({\bf{y}}_1, \dots , {\bf{y}}_n)$ at the same time, that is, if
$$t({\bf{w}}_1, \dots, {\bf{w}}_k, 0,\dots, 0, {\bf{y}}_1, \dots , {\bf{y}}_n) = 0 = t({\bf{w}}_1, \dots, {\bf{w}}_k, {\bf{x}}_1,\dots,  {\bf{x}}_m, 0, \dots,0).$$
The collection of all such commutator terms will be denoted by $$\mathsf{CT}_{\mathbb C} (({\bf{x}}_1, \cdots, {\bf{x}}_m),({\bf{y}}_1, \cdots, {\bf{y}}_n)).$$
(b) The commutator $[X,Y]_{{\mathbb C}, A}$ is defined as
\begin{eqnarray}
& &[X,Y]_{{\mathbb C}, A} = \{ t_A (w_1, \dots , w_k, x_1, \dots , x_m, y_1, \dots , y_n) \in A \, \mathrm{\, such \, that :}  \qquad \qquad  \nonumber \\
& & w_1, \dots, w_k \in A; x_1, \dots, x_m \in X; y_1, \dots, y_n \in Y; \nonumber \\
 & & t({\bf{w}}_1, \dots, {\bf{w}}_k, {\bf{x}}_1,\dots,  {\bf{x}}_m, {\bf{y}}_1, \dots , {\bf{y}}_n)   \in \mathsf{CT}_{\mathbb C} (({\bf{x}}_1, \cdots, {\bf{x}}_m),({\bf{y}}_1, \cdots, {\bf{y}}_n)) \},\nonumber
\end{eqnarray}
where $t_A (w_1, \dots , w_k, x_1, \dots , x_m, y_1, \dots , y_n)$ denotes $$t({\bf{w}}_1, \dots, {\bf{w}}_k, {\bf{x}}_1,\dots,  {\bf{x}}_m, {\bf{y}}_1, \dots , {\bf{y}}_n)$$ calculated in $A$ when ${\bf{w}}_1, \dots, {\bf{w}}_k, {\bf{x}}_1,\dots, { \bf{x}}_m, {\bf{y}}_1, \dots , {\bf{y}}_n$ are substituted by\\ $w_1, \dots, w_k, x_1, \cdots, x_m, y_1, \dots, y_n$.
\end{definition}

\begin{remark}
In fact $[X,Y]_{\mathbb C, A}$ is defined in \cite{Urs94} more generally, when $X$ and $Y$ are arbitrary subsets in $A$, not necessarily subalgebras. But it is shown there that $[X,Y]_{\mathbb C, A}$ is always an ideal in $A$, and that it will not be changed if $X$ and $Y$ are replaced by the ideals they generate.
\end{remark}
The $(b)$ part of the following proposition is similar to Proposition $3.3$ in \cite{JMU}:
\begin{proposition}\label{equalitycommutator}
\begin{itemize}
\item[(a)]  $[X,Y]_{\mathbb C, A} \subseteq [X,Y]_{ A \mid 1} $.
\item[(b)] If $A$ is the free algebra in $\mathbb C$ on a set $S$, and $X$ and $Y$ are freely generated by $P$ and $Q$ respectively, which are disjoint subsets of $S$, then
$$[X,Y]_{\mathbb C, A} = [X,Y]_{ A \mid 1}.$$
\end{itemize}
\end{proposition}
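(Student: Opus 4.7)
The plan is to handle the two inclusions separately: $(a)$ is essentially a translation from the syntactic notion of commutator term into the categorical kernel condition defining $[X,Y]_{A\mid 1}$, while $(b)$ exploits freeness to make this translation an equivalence.

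For $(a)$, given a commutator term $t({\bf w}_1, \dots, {\bf w}_k, {\bf x}_1, \dots, {\bf x}_m, {\bf y}_1, \dots, {\bf y}_n)$ together with elements $w_i \in A$, $x_j \in X$, $y_l \in Y$, the first step is to lift the value $t_A(\vec w, \vec x, \vec y) \in A$ to an element $\tilde t \in A+X+Y$ by evaluating $t$ at $\iota_1(w_i)$, $\iota_2(x_j)$, $\iota_3(y_l)$. Next, the two defining identities $t(\vec{\bf w}, \vec 0, \vec{\bf y}) = 0$ and $t(\vec{\bf w}, \vec{\bf x}, \vec 0) = 0$ of a commutator term translate directly into $[\iota_1, 0, \iota_2](\tilde t) = 0$ in $A+Y$ and $[\iota_1, \iota_2, 0](\tilde t) = 0$ in $A+X$, because these composites amount precisely to substituting $0$ for the relevant coordinates. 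Hence $\tilde t$ lies in the kernel of $\langle [\iota_1, \iota_2, 0], [\iota_1, 0, \iota_2] \rangle$, and its image under $[1_A, x, y]$ is exactly $t_A(\vec w, \vec x, \vec y)$, which therefore belongs to $[X, Y]_{A\mid 1}$.

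For $(b)$, the key point is that under the freeness hypotheses the coproducts admit an explicit free presentation: $A+X+Y \cong F(S \sqcup P' \sqcup Q')$ with $P', Q'$ formal disjoint copies of $P, Q$, and similarly $A+X \cong F(S \sqcup P')$ and $A+Y \cong F(S \sqcup Q')$. Every element of $A+X+Y$ is then represented by a term $t(\vec s, \vec p', \vec q')$, and by the universal property of a free algebra such a term represents the zero element if and only if the corresponding equation is derivable from the equational theory of $\mathbb C$. Consequently the kernel condition translates literally into $t$ being a commutator term in $(\vec{\bf p}', \vec{\bf q}')$. The final step is to push forward under $[1_A, x, y]$, which sends $\vec s \mapsto \vec s$, $\vec p' \mapsto \vec p$, $\vec q' \mapsto \vec q$, producing $t_A(\vec s, \vec p, \vec q)$ with $\vec s \in A$, $\vec p \in P \subseteq X$, $\vec q \in Q \subseteq Y$; this element is visibly an instance in $[X, Y]_{\mathbb C, A}$, and together with $(a)$ this yields equality.

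The main obstacle is the semantic-to-syntactic step in $(b)$: one has to argue carefully that the vanishing of an element of the free coproduct $A+X$ corresponds to a bona fide identity of $\mathbb C$, which relies on the universal property of the free algebra together with the fact that $P$ and $Q$ are \emph{disjoint} subsets of $S$ (so that ``$\vec{\bf y} \mapsto 0$'' really captures what is intended, without collisions with $P$ or $S$). Once this dictionary is in place, no additional categorical input beyond Definitions \ref{subobjectcommutator} and \ref{commutatorterms} is required, and the proof reduces to a direct comparison of the kernel of $\langle [\iota_1, \iota_2, 0], [\iota_1, 0, \iota_2] \rangle$ with the collection of commutator-term evaluations.
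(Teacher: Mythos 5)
Your proposal is correct and follows essentially the same route as the paper: part (a) by lifting the evaluated commutator term to $A+X+Y$ and observing it lands in the kernel of $\langle [\iota_1,\iota_2,0],[\iota_1,0,\iota_2]\rangle$, and part (b) by using freeness to read any kernel element as a genuine commutator term (the paper phrases this by writing $w_i, x_j, y_l$ themselves as terms in the generators and composing, but the content is the same semantic-to-syntactic dictionary you describe). No substantive differences.
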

\begin{proof}
$(a)$ We have to show that, in the notation of Definition \ref{commutatorterms} (b), we have
$$t_A (w_1, \dots , w_k, x_1, \dots , x_m, y_1, \dots , y_n) \in [X,Y]_{A \mid 1} $$
whenever: $w_1, \dots, w_k$ are in $A$; $x_1, \dots, x_m$ are in $X$; $y_1, \dots, y_n$ are in $Y$; and \\Ê$t = t({\bf{w}}_1, \dots, {\bf{w}}_k, {\bf{x}}_1,\dots,  {\bf{x}}_m, {\bf{y}}_1, \dots , {\bf{y}}_n)$ is in $
\mathsf{CT}_{\mathbb C} (({\bf{x}}_1, \cdots, {\bf{x}}_m),({\bf{y}}_1, \cdots, {\bf{y}}_n)).$\\
Without loss of generality we can assume that not only the sets $\{ {\bf{x}}_1,\dots,  {\bf{x}}_m\}$ and $\{ {\bf{y}}_1, \dots , {\bf{y}}_n \}$ are disjoint, but also each of them is disjoint with $\{ {\bf{w}}_1, \dots, {\bf{w}}_k \}$. After that we consider the coproduct diagram
$$
\xymatrix{& A \ar[d]^{\iota_1} & \\
X \ar[r]^-{\iota_2}&A +X+Y  & {Y,} \ar[l]_-{\iota_3}
}
$$
and the element $$c= t_{A+X+Y} (\iota_1(w_1), \dots, \iota_1(w_k), \iota_2 (x_1), \dots, \iota_2 (x_m),\iota_3 (y_1), \dots, \iota_3 (y_n))$$
in it. Since $t$ is in $\mathsf{CT}_{\mathbb C} (({\bf{x}}_1, \cdots, {\bf{x}}_m),({\bf{y}}_1, \cdots, {\bf{y}}_n))$, $c$ belongs to the kernel $X \otimes^A Y$ of the map (\ref{canonical}) with $W=A$ and, on the other hand, it is sent to
$t_A (w_1, \dots, w_k, x_1, \dots, x_m, y_1, \dots, y_n)$ by the canonical map $A+X+Y \rightarrow A$. Therefore $t_A (w_1, \dots, w_k, x_1, \dots, x_m, y_1, \dots, y_n)$ is in $[X,Y]_{A \mid 1}$.

\noindent $(b)$: We have to show that $[X,Y]_{A \mid 1} \subseteq [X,Y]_{\mathbb C,  A}$. Any element $a$ of $[X,Y]_{A \mid 1}$ can be presented in the form
$$a = t_A (w_1, \dots w_k, x_1, \dots, x_m, y_1, \dots, y_n),$$
with $t_{A+X+Y} (\iota_1 (w_1), \dots, \iota_{1}(w_k),\iota_2 (x_1), \dots, \iota_{2}(x_m), \iota_3(y_1), \dots, \iota_3(y_n))$ in \\$X \otimes^A Y$. Since $A$ is free on $S$ and $X$ and $Y$ are free on disjoint subsets of $S$, using $S$ as our alphabet, we can assume that:
\begin{itemize}
\item the elements $w_1, \dots, w_k, x_1, \dots, x_m, y_1, \dots y_n$ are themselves terms, and moreover:
\item $w_1, \dots, w_k$ are, say, terms of variables ${\bf w}_1, \dots, {\bf w}_{k'} \in S$; $x_1, \dots, x_m$ are terms of variables ${\bf x}_1, \dots, {\bf x}_{m'} \in S$; and $y_1, \dots, y_n$ are terms of variables
${\bf y}_1, \dots, {\bf y}_{n'} \in S$ - with the sets $\{ {\bf x}_1, \dots, {\bf x}_{m'} \}$ and $\{ {\bf y}_1, \dots, {\bf y}_{n'} \}$ being disjoint.
\end{itemize}
Now let $u= u({\bf w}_1, \dots, {\bf w}_{k'}, {\bf x}_1, \dots, {\bf x}_{m'}, {\bf y}_1, \dots, {\bf y}_{n'})$ be the term $t$ considered as a term of variables ${\bf w}_1, \dots, {\bf w}_{k'}, {\bf x}_1, \dots, {\bf x}_{m'}, {\bf y}_1, \dots, {\bf y}_{n'}$. Then $u$ belongs to $\mathbf{CT}_{\mathbb C} (({\bf x}_1, \dots, {\bf x}_{m'}), ({\bf y}_1, \cdots, {\bf y}_{n'}))$ and at the same time $u=a$ as elements of $A$. Therefore $a$ belongs to $[X,Y]_{\mathbb C, A}$, as desired.
\end{proof}
\begin{theorem}\label{equalitysemiabelian}
If $\mathbb C$ is semi-abelian (=classically ideal determined), then the equality $$ [X,Y]_{\mathbb C, A} = [X,Y]_{A \mid 1}$$
always holds.
\end{theorem}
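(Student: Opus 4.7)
The plan is to prove the missing inclusion $[X,Y]_{A\mid 1}\subseteq [X,Y]_{\mathbb{C},A}$, since the reverse inclusion is already Proposition \ref{equalitycommutator}(a), by reducing to the free case handled in Proposition \ref{equalitycommutator}(b). The key tool will be the preservation of the $1$-weighted commutator under surjective homomorphisms, supplied by Corollary \ref{coronormalclosure}(c) (which is where the semi-abelian hypothesis genuinely enters).

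First I would build a suitable free presentation of the triple $(A,X,Y)$. Choose disjoint sets $P$ and $Q$ together with surjective maps $P\twoheadrightarrow X$ and $Q\twoheadrightarrow Y$ onto generating sets of $X$ and $Y$ (one may simply take $P$ and $Q$ to be disjoint copies of the underlying sets of $X$ and $Y$), and take a further set $R$ that surjects onto a generating set of $A$ (e.g.\ $R=A$). Let $F$ be the free algebra on $S=P\sqcup Q\sqcup R$, and let $\alpha\colon F\twoheadrightarrow A$ be the homomorphism determined by these chosen maps. Define $X'\subseteq F$ to be the subalgebra generated by $P$ and $Y'\subseteq F$ the one generated by $Q$; because $P$ and $Q$ are disjoint subsets of the free generating set, $X'$ and $Y'$ are themselves free on $P$ and $Q$. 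By construction $\alpha(X')=X$, $\alpha(Y')=Y$, and $\alpha$ is surjective.

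Next I would invoke Corollary \ref{coronormalclosure}(c) (taking the weight $W'=F$, whose image is $W=A$, so the weight remains $1$): it gives
\[
\alpha\bigl([X',Y']_{F\mid 1}\bigr)\;=\;[X,Y]_{A\mid 1}.
\]
Since $F$ is free on $S$ and $X'$, $Y'$ are free on the disjoint subsets $P,Q\subseteq S$, Proposition \ref{equalitycommutator}(b) applies and yields $[X',Y']_{F\mid 1}=[X',Y']_{\mathbb{C},F}$. Combining, every element of $[X,Y]_{A\mid 1}$ has the form $\alpha\bigl(t_F(w'_1,\dots,w'_k,x'_1,\dots,x'_m,y'_1,\dots,y'_n)\bigr)$ for some commutator term $t\in\mathsf{CT}_{\mathbb{C}}(({\bf x}_1,\dots,{\bf x}_m),({\bf y}_1,\dots,{\bf y}_n))$ and elements $w'_i\in F$, $x'_j\in X'$, $y'_\ell\in Y'$. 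Applying the homomorphism $\alpha$ commutes with term evaluation, so this element equals $t_A(\alpha(w'_i),\alpha(x'_j),\alpha(y'_\ell))$ with $\alpha(x'_j)\in X$ and $\alpha(y'_\ell)\in Y$, which by Definition \ref{commutatorterms}(b) belongs to $[X,Y]_{\mathbb{C},A}$. This establishes the required inclusion and hence the equality.

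The only potential obstacle is checking that the free presentation can be chosen so that $X$ and $Y$ simultaneously lift to subalgebras that are free on \emph{disjoint} generating sets inside a common free algebra—this is precisely the hypothesis of Proposition \ref{equalitycommutator}(b), and the construction above handles it cleanly by building $F$ directly from a disjoint union $P\sqcup Q\sqcup R$. Everything else is a straightforward combination of (a) and (b) of Proposition \ref{equalitycommutator} with the surjection-preservation property afforded by the semi-abelian context.
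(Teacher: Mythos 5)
Your argument is exactly the paper's first proof, just written out in more detail: both reduce to a free presentation on a set $S=P\sqcup Q\sqcup R$ with $P,Q$ disjoint, invoke the surjection-preservation of $[\,\cdot\,,\cdot\,]_{A\mid 1}$ from Corollary \ref{coronormalclosure}(c) together with the (obvious) analogous fact for $[\,\cdot\,,\cdot\,]_{\mathbb C,A}$, and conclude via Proposition \ref{equalitycommutator}. The proposal is correct.
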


\noindent \emph{Proof 1.} Using Corollary \ref{coronormalclosure} (c) and a similar result for the commutator $[-,- ]_{\mathbb C}$, which is obvious, as mentioned in \cite{Urs94}, it suffices to show that there exists
\begin{itemize}
\item a set $S$ with disjoint subsets $P$ and $Q$, such that
\item there is a surjective homomorphism $\alpha$ from the free algebra on $S$ to $A$, under which the images of $P$ and $Q$ generate $X$ and $Y$ respectively
\end{itemize}
- which is a triviality.
\vspace{3mm}

\noindent \emph{ Proof 2.} $[X,Y]_{\mathbb C, A} = [A \times_{A/X} A, A \times_{A/Y} A](0)$ by Theorem $2.6$ \cite{GU}, and $$[A \times_{A/X} A, A \times_{A/Y} A](0)= [X,Y]_{A \mid 1}$$ by \ref{coronormalclosure}(d).
\hfill $\Box$
\begin{remark}
(a) Just as $[X,Y]_{X \mid W}$ generalizes $[X,Y]_{X \mid 1}$, we could introduce $[X,Y]_{\mathbb C, A \mid W} $ by requiring $w_1, \dots, w_k$ in Definition \ref{commutatorterms} (b) to belong to $W$. This generalization of the commutator introduced in \cite{Urs94} will obviously allow extending Proposition \ref{equalitycommutator} and Theorem \ref{equalitysemiabelian} to the case of arbitrary $W$, except that our second proof of Theorem \ref{equalitysemiabelian} would also require introducing a suitable notion of \emph{weighted span commutator}.

\noindent (b) When $\mathbb C$ is ``nice'', and $X$ and $Y$ are normal subalgebras in $A$, one expects that the Pedicchio commutator $[A \times_{A/X} A, A \times_{A/Y} A]$ agrees with the Huq commutator $[X,Y]$, that is
\begin{equation}\label{Huq=Smith}
[X,Y] = [A \times_{A/X} A, A \times_{A/Y} A](0),
\end{equation}
 or, equivalently,
 $$ A \times_{A/[X,Y]}A = [A \times_{A/X} A, A \times_{A/Y} A].$$
 The first such result, where ``nice'' was interpreted as ``strongly protomodular'' was proved in \cite{Strongly}, although its main ingredient is in fact Theorem $6.1$ in \cite{BG}. In particular this applies to all varieties of groups, of (associative, non-unital) rings, of Lie algebras over commutative rings, of (pre)crossed modules over those, and to some other classical algebraic structures.  For some special types of pairs of normal subalgebras $X$ and $Y$ it was later shown that the Pedicchio and the Huq commutators do agree even in general semi-abelian categories (see, for instance, Proposition $2.2$ in \cite{GVdL} and Proposition $4.6$ in \cite{EVdL}), although in general
 this is not the case (see the counter-example due to the second author explained in Section $6$ of \cite{Strongly}, although in fact [12] provides an earlier counter-example, as follows
from what we say at the end of $(c)$ below).

The equality (\ref{Huq=Smith}) implies
 \begin{eqnarray}[X,Y]&=&[X,Y]_{A \mid 0} = [X,Y]_{A \mid W} = [X,Y]_{A \mid 1} \nonumber \\
 & =& N[X,Y]_{A \mid 0} = N[X,Y]_{A \mid W} = N[X,Y]_{A \mid 1},\nonumber \end{eqnarray}
 for every $W$.

 However, although we always trivially have
 $$
 [X,Y]=[X,Y]_{A \mid 0} \le [X,Y]_{A \mid W} \le [X,Y]_{A \mid 1}
 $$
 and
 $$
 N[X,Y]_{A \mid 0} \le N[X,Y]_{A \mid W} \le N[X,Y]_{A \mid 1},
 $$
 when $X$ and $Y$ are not normal, we might have
 $$[X,Y]_{A \mid 0} \not= [X,Y]_{A \mid W} \not= [X,Y]_{A \mid 1},$$
 even in the varieties of groups. The same is true for commutative (associative, non-unital) rings, where all commutators can be calculated very easily, as our following Example \ref{rings} will show.
\item[(c)]  The indices $\mathbb C$ and $A$ in the symbol $[X,Y]_{\mathbb C, A}$ indicate of course that the
commutator  $[X,Y]_{\mathbb C, A}$ depends, in general, not only on $X$ and $Y$, but also on $\mathbb C$ and
$A$. However, it might be independent of $\mathbb C$ and $A$ in some sense. The concept of
independence of $\mathbb C$ is especially strange from the categorical viewpoint, but from
the universal algebra viewpoint it has a standard meaning, namely: if $\mathbb A$ is a
subvariety in $\mathbb C$ containing $A$, then $[X,Y]_{{\mathbb A}, A}= [X,Y]_{{\mathbb C}, A}$. Such independence is
actually expected under some additional conditions on $\mathbb C$, and, for instance,
subtractivity (which is much weaker than semi-abelian-ness) of $\mathbb A$ suffices, as
shown in \cite{Urs94}. The independence of $A$ is defined by saying that if $B$ is a
subalgebra of $A$, containing $X$ and $Y$, then $[X,Y]_{{\mathbb C},B} = [X,Y]_{{\mathbb C},A}$. It fails even in a
general semi-abelian variety, as the Exercise $10$ of Chapter $5$ in \cite{FM} shows, but it
is essentially equivalent to (\ref{Huq=Smith}), and so it holds in ÒniceÓ cases (see $(b)$ above).
\end{remark}
\begin{example}\label{rings}
Let $\mathbb C$ be the variety of commutative (associative, non-unital) rings. Let us recall and use the following conventions:
\begin{itemize}
\item For arbitrary $R$ and $S$ in $\mathbb C$, their classical tensor product $R\otimes S$ (over the ring $\mathbb Z$ of integers) is a commutative ring, whose multiplication is compatible with the (non-unital) $R$-module and $S$-module structure. Using this structure, and not making distinction between the left and right module structures we can present the coproduct $R+S$ as follows: as an abelian group it is the product $R \times S \times (R \otimes S)$, and its multiplication is defined by
$$(r,s,t)(r',s',t') = (rr', ss', r\otimes s' + s \otimes r' +rt'+tr'+st'+ts' +tt').$$
\item When $R$ and $S$ are subrings of $A$, we shall write $RS$ for the subring in $A$ generated by the elements in $A$ of the form $rs$ with $r$ in $R$ and $s$ in $S$, which is nothing but the set of (finite) sums of elements in $A$ of that form. We shall also write
$$R+^A S = \{ r+s + t \in A \mid r \in R,s \in S  {\, \rm and \, }  t \in RS \} ,$$
having in mind that this is the subring in $A$ generated by the union $R \cup S$, and therefore a quotient ring of the coproduct $R+S$.
\end{itemize}
Then, using the description of the coproducts in $\mathbb C$, an easy calculation gives
$$[X,Y] _{A \mid W} = XY +^A WXY,$$
from which we obtain
$$[X,Y] = [X,Y]_{A \mid 0} = XY,$$
$$[X,Y]_{A \mid 1} = N[X,Y]_{A \mid 0}= N [X,Y]_{A\mid W} = N [X,Y]_{A \mid 1}= XY +^A AXY.$$
In particular, if $X$ and $Y$ are ideals in $A$, then all these commutators coincide with $XY$.
\end{example}

\end{document}